\documentclass[10pt,a4paper]{amsart}
\usepackage{lmodern}

\usepackage[marginpar=2cm,asymmetric,ignoremp,margin=2.5cm]{geometry}

\usepackage{comment}
\usepackage[bookmarksdepth=3]{hyperref}
\usepackage{doi}
\usepackage{bbm}
\usepackage{cancel}

\allowdisplaybreaks

\def\equationautorefname~#1\null{Equation~(#1)\null}

\usepackage{mathdots}

\renewcommand{\vec}[1]{\mathbf{\uline{#1}}}

\DeclareMathOperator{\id}{id}

\DeclareMathOperator{\bl}{bl}

\newcommand{\dd}{\mathrm{d}}

\newcommand{\ttw}{\mathop{\widetilde{t}}{}}
\newcommand{\ztw}{\mathop{\widetilde{\zeta}}{}}

\newcommand{\ooe}{\vec{{\mathrm{o}}}\mathrm{e}}

\newsavebox{\overlongequation}
\newenvironment{overlong}
 {\begin{displaymath}\begin{lrbox}{\overlongequation}$\displaystyle}
 {$\end{lrbox}\makebox[0pt]{\usebox{\overlongequation}}\end{displaymath}}   
  
\usepackage[normalem]{ulem}
\usepackage{amssymb}

\DeclareMathOperator{\rev}{rev}
\renewcommand{\vec}[1]{\uline{{\boldsymbol #1}}}

\usepackage{mathtools}
\usepackage{thmtools}

\declaretheorem[
style=plain,
name=Theorem,
numberwithin=section,
refname={Theorem,Theorems},
Refname={Theorem,Theorems}
]{Thm}
\declaretheorem[
style=plain,
name=Proposition,
numberlike=Thm,
refname={Proposition,Propositions},
Refname={Proposition,Propositions}
]{Prop}

\declaretheorem[
style=definition,
name=Remark,
numberlike=Thm,
refname={Remark,Remarks},
Refname={Remark,Remarks},
]{Rem}

\newcommand{\C}{\mathbb{C}}

\newcommand{\wwt}{{\operatorname{wt}}}
\newcommand{\hht}{{\operatorname{ht}}}
\newcommand{\ddp}{{\operatorname{dp}}}

\DeclareMathOperator{\res}{res}

\DeclareMathOperator{\Li}{Li}

\newcommand{\ii}{\mathrm{i}}

\let\epsilon\varepsilon
\let\eps\varepsilon

\newcommand{\ev}{\mathrm{ev}}
\newcommand{\od}{\mathrm{od}}

\newcommand{\sgnarg}[2]{
	\!\left( \genfrac{}{}{0pt}{0}{#1}{#2} \right)
}

\usepackage{xfrac}
\newcommand{\half}{{\sfrac{1\!}{2}}}

\newcommand{\ff}[2]{\genfrac[]{0pt}{}{#1}{#2}}
\newcommand{\st}[2]{\genfrac..{0pt}{}{#1}{#2}}

\DeclareMathOperator{\B}{B}

\newcommand{\upf}{\mathrm{f}}
\newcommand{\fininf}{{\upf,\infty}}
\newcommand{\finbul}{{\upf,\bullet}}
\newcommand{\finfin}{{\upf,\upf}}
\newcommand{\stL}[2]{\mathrel{}\middle|\mathrel{} \st{#1}{#2} }
\newcommand{\stR}[2]{\st{#1}{#2} \mathrel{}\middle|\mathrel{}}

\let\overlineO\overline
\renewcommand{\overline}[1]{\overlineO{{\vphantom{I}}#1}}
\newcommand{\ol}{\overline}

\newmuskip\pFqmuskip

\newcommand*\pFq[6][8]{%
	\begingroup 
	\pFqmuskip=#1mu\relax
	\mathchardef\normalcomma=\mathcode`,
	\mathcode`\,=\string"8000
	\begingroup\lccode`\~=`\,
	\lowercase{\endgroup\let~}\pFqcomma
	{}_{#2}F_{#3}{\left[\genfrac..{0pt}{}{#4}{#5};#6\right]}%
	\endgroup
}
\newcommand{\pFqcomma}{{\normalcomma}\mskip\pFqmuskip}

\newcommand{\vdotsB}{\raisebox{-2pt}{$\vdots$}}
\newcommand{\ddotsB}{\raisebox{-2pt}{$\ddots$}}

\usepackage{enumitem}
\setlist[enumerate,1]{leftmargin=*, label={\normalfont(\roman*)}}

\begin{document}
	
	\title[Galois descent for M$t$V's of maximal height]{An explicit Galois descent for \\ multiple $t$-values of maximal height}
	
	\author{Steven Charlton}
    \address{Department of Mathematics and Computer Science, Division of Mathematics, University of Cologne,
Weyertal 86-90, 50931 Cologne, Germany}
    \email{steven.charlton@uni-koeln.de, charlton@mpim-bonn.mpg.de}
	
	\author{Michael E. Hoffman}
	\address{Department of Mathematics, U.S. Naval Academy, Annapolis, MD 21402, USA}
	\email{meh@usna.edu}
    
	\author{Nobuo Sato}
	\address{Department of Mathematics, National Taiwan University, No. 1, Sec. 4, Roosevelt Rd., Taipei 10617, Taiwan (R.O.C.)}
	\email{nbsato@ntu.edu.tw}
	
	\keywords{
		multiple zeta values, multiple $t$ values, block decomposition, Galois descent, iterated beta integral%
	}
	\subjclass[2020]{
		Primary 11M32, 
		11M41; 
		Secondary 
		33C20, 
		33B15 
	}

	\date{May 11, 2026}
	
	\begin{abstract}
		We give an explicit formula for the Galois descent expressing multiple  $t$-values of maximal height in terms of classical multiple zeta values, making precise Murakami's earlier motivic result.  Our results rely on the theory of iterated beta integrals.  We apply this formula to obtain evaluations of various multiple zeta-half values.  
	\end{abstract}
	
	\maketitle

    \section{Introduction}

	The multiple series
	\[
		\Li_{k_1,\ldots,k_d}(x_1,\ldots,x_d) \coloneqq \sum_{\substack{0<m_{1}<m_{2}<\cdots<m_{d}}
		}\frac{x_{1}^{m_{1}}x_{2}^{m_{2}}\cdots x_{d}^{m_{d}}}{m_{1}^{k_{1}}m_{2}^{k_{2}}\cdots m_{d}^{k_{d}}} \,,
	\]
	are convergent in the unit polydisc \( \lvert x_1 \rvert, \ldots, \lvert x_d \rvert < 1 \), and define a family of multivariable holomorphic functions called multiple polylogarithms.   For an index \( \Bbbk = (k_1,\ldots,k_d) \), we define
	\begin{itemize}
		\item the weight as \( \wwt(\Bbbk) = k_1 + \cdots + k_d \), the sum of all arguments,
		\item the depth as \( \ddp(\Bbbk) = d \), the number of arguments,
		\item the height as \( \hht(\Bbbk) = \#\{ i \mid k_i \geq 2 \} \), the number of arguments \( \geq 2 \).
	\end{itemize}
	When all \( x_i = 1 \), these series define the \emph{multiple zeta values} (MZV's)
	\[
		\zeta(k_1,\ldots,k_d) = \sum_{\substack{0<m_{1}<m_{2}<\cdots<m_{d}}
		}\frac{1}{m_{1}^{k_{1}}m_{2}^{k_{2}}\cdots m_{d}^{k_{d}}} \qquad (\eqqcolon \Li_{k_1,\ldots,k_d}(1,\ldots,1))\,,
	\]
    which are well-studied and whose algebraic and number-theoretic properties
    are of great interest, both to mathematicians \cite{Cartier02} and to physicists
    \cite{knotsKreimer}.  When all \( x_i =\eps\in\{-1,1\}\), one obtains
    the \emph{alternating multiple zeta values} (AMZV's)
	\[
		\zeta\sgnarg{k_1,\ldots,k_d}{\eps_1,\ldots,\eps_d} = \sum_{0 < m_1 < m_2 < \cdots < m_d} \frac{\eps_1^{m_1} \cdots \eps_d^{m_d}}{m_1^{k_1} \cdots m_d^{k_d}} \qquad (\eqqcolon \Li_{k_1,\ldots,k_d}(\eps_1,\ldots,\eps_d))\,,
	\]
	which are the first example of higher-level multiple zeta values; here the level is 2, as one invokes second roots of unity.  
    We note that AMZV's appeared in the particle-physics literature
    already in the 1980's \cite{Broadhurst86}.
    Typically one simplifies the notation of AMZV's, and writes \( \overline{k_i} \) exactly when \( \eps_i = -1 \).  For example
	\[
		\zeta(k_1, \overline{k_2}, \overline{k_3}) \coloneqq  \zeta\sgnarg{k_1,k_2,k_3}{\,1, -1, -1} \,.
	\]
	
	A different type of level 2 sum is given by the multiple $t$-values (M$t$V's) \cite{HoffmanOdd19} whose denominators are restricted to 
    the odd integers:
	\[
		t(k_1,\ldots,k_d) 
		= \sum_{\substack{0<m_{1}<m_{2}<\cdots<m_{d}}} \frac{1}{(2m_{1}-1)^{k_{1}}(2m_{2}-1)^{k_{2}}\cdots (2m_{d}-1)^{k_{d}}}
		= \sum_{\substack{0<\ell_{1}<\ell_{2}<\cdots<\ell_{d} \\ \text{$\ell_i$ odd}}} \frac{1}{\ell_{1}^{k_{1}}\ell_{2}^{k_{2}}\cdots \ell_{d}^{k_{d}}} \,.
		\]
	By inserting factors \( \frac{1}{2} (1 - (-1)^{\ell_i}) \) into the numerator, and extending the sum to all \( \ell_i \), one directly obtains an expression for the M$t$V's in terms of AMZV's
	\begin{equation}\label{eqn:ttoz}
		t(k_1,\ldots,k_d) = \frac{1}{2^d} \sum_{\eps_1, \ldots,\eps_d \in \{ \pm 1 \}} \eps_1 \cdots \eps_d \, \zeta\sgnarg{k_1,\ldots,k_d}{\eps_1,\ldots,\eps_d} \,.
	\end{equation}
	A surprising fact, established by Murakami \cite[Theorem 1]{MurakamiMtVs21}, is that when the height is maximal, i.e. all \( k_i \geq 2 \), the  M$t$V \( t(k_1,\ldots,k_d) \) actually reduces to level 1.  
	
	\begin{Thm}[Murakami, Theorem 1, \cite{MurakamiMtVs21}]
		When all \( k_i \geq 2 \), \( t(k_1,\ldots,k_d) \) is a \(\mathbb{Q}\)-linear combination of multiple zeta values.
	\end{Thm}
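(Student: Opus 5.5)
The plan is to work motivically and apply Glanois' unramifiedness criterion for motivic alternating MZV's. That criterion, the level-2 analogue of Brown's, says the following. Let \(\xi\in\mathcal{H}^{(2)}\) be a motivic AMZV of weight \(n\). Suppose \(D_1\xi=0\) and, for every odd \(2r+1<n\), the weight-\(n-2r-1\) component of \(D_{2r+1}\xi\) lies in \(\mathcal{H}^{(1)}\). Then \(\xi\in\mathcal{H}^{(1)}\) up to a rational multiple of \(\zeta^{\mathfrak m}(n)\). Since \(\zeta(n)\) is itself an MZV, applying the period map then gives the theorem. So the whole argument is an induction on weight, showing that the motivic lift \(t^{\mathfrak m}(k_1,\ldots,k_d)\) satisfies these two conditions whenever all \(k_i\geq 2\).

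\textbf{Iterated integral form.} First I write \(t(k_1,\ldots,k_d)\) as an iterated integral on \([0,1]\) in the alphabet \(\omega_0=\frac{dt}{t}\) and
\[
\omega_\pm=\frac{dt}{1-t^2}=\tfrac12\Bigl(\frac{dt}{1-t}+\frac{dt}{1+t}\Bigr).
\]
Expanding \(\frac{1}{1-t^2}\) as a geometric series produces only odd exponents after integration. This gives (up to sign and ordering conventions)
\[
t(k_1,\ldots,k_d)=I(0;\omega_\pm\omega_0^{k_1-1}\cdots\omega_\pm\omega_0^{k_d-1};1).
\]
I take the motivic lift \(t^{\mathfrak m}\) to be the corresponding combination of motivic iterated integrals \(I^{\mathfrak m}(0;a_1,\ldots,a_n;1)\) with \(a_i\in\{0,\pm1\}\). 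This lift is compatible with \eqref{eqn:ttoz}.

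\textbf{Computing the coaction.} Next I compute \(D_{2r+1}t^{\mathfrak m}\) with Goncharov's formula. Each term cuts out a consecutive subword \(a_{p+1}\cdots a_{p+2r+1}\). The left factor is \(I^{\mathfrak L}(a_p;a_{p+1},\ldots,a_{p+2r+1};a_{p+2r+2})\) and the right factor is the quotient word. Because \(\omega_\pm\) is symmetric under \(t\mapsto -t\), summing over the \(\pm\) choices at the cut points gives the following structure.

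The quotient words again have the shape \(\omega_\pm\omega_0^{m_1-1}\cdots\). Some block may have its \(\omega_0\)'s removed; this leaves a \(1\) in the index or two adjacent \(\omega_\pm\)'s. The first step is to reorganise these by shuffle regularisation and the identity \(\omega_\pm\omega_\pm\)-collapse, using the maximal-height hypothesis. The goal is to show that the right-hand factors are \(\mathbb{Q}\)-combinations of lower-weight \(t^{\mathfrak m}\) of maximal height together with MZV's. By the inductive hypothesis these lie in \(\mathcal{H}^{(1)}\).

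The left factors are motivic AMZV's in the Lie coalgebra. However, the criterion only requires the right-hand side of \(D_{2r+1}\) to be unramified, so these need no control beyond \(D_1\).

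\textbf{The main obstacle: \(D_1=0\).} The weight-one pieces \(I^{\mathfrak L}(a;b;c)\) with \(a,b,c\in\{0,\pm1\}\) are rational multiples of \(\log^{\mathfrak L}2\). I must show that these contributions cancel in \(D_1t^{\mathfrak m}\). The cut through a letter \(\omega_\pm\) with neighbours \(\omega_0\) on both sides gives \(I^{\mathfrak L}(0;\pm1;0)=0\). The dangerous cuts are those where an \(\omega_\pm\) is adjacent to another \(\omega_\pm\), or to an endpoint \(0\) or \(1\) of the path.

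Maximal height (\(k_i\ge2\)) guarantees that every \(\omega_\pm\) is followed by an \(\omega_0\). I expect this to make the left neighbour of each \(\omega_\pm\) either \(\omega_0\) or the endpoint \(0\). Then the terms pair off, \(\tfrac12(\log 2-\log 2)\)-style, between the \(\pm\) components. The last letter interacting with the endpoint \(1\) needs a separate check.

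If this direct cancellation proves messy, I would instead pass to the involution \(t\mapsto\frac{1-t}{1+t}\). This maps \(\omega_\pm\) to \(\tfrac12\omega_0\)-type forms and \(\omega_0\) to \(-\omega_\pm\)-type forms. After the change of variables, \(D_1\) reduces to counting cuts at the boundary, where vanishing is visible.
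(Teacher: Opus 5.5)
Your route is the motivic one the paper attributes to Murakami, not the paper's own. The paper deduces the statement from the explicit formula of Theorem~\ref{thm:main:desc}, obtained by inverting the convolution formula, which is in turn proved with iterated beta integrals (Hirose--Sato translation invariance) and a purely-imaginary/path-splitting argument at $1$. That route needs no coaction, and it also fixes the coefficient of $\zeta(n)$, which Glanois' criterion cannot see.

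Your framework (Glanois' criterion plus induction on weight) is sound, but two details are off.

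First, your integral representation is wrong for $d\ge 2$. With $\frac{dt}{1-t^2}$ in every slot, the second summation variable comes out even; for $d=2$ you get $\sum_{a<b,\ a\ \mathrm{odd},\ b\ \mathrm{even}} a^{-k_1}b^{-k_2}$. Only the first slot carries $\frac{dt}{1-t^2}=\tfrac12(e_{-1}-e_1)$, which is \emph{anti}-invariant under $t\mapsto -t$. All later slots need the invariant form $\frac{t\,dt}{1-t^2}=-\tfrac12(e_1+e_{-1})$ (cf.\ the representation just before \eqref{eqn:int:ttw}). This asymmetry between the first letter and the later ones is exactly what the coaction computation must exploit. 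Your fallback via $t\mapsto\frac{1-t}{1+t}$ also relies on the incorrect letters.

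Second, $D_1=0$ is not the obstacle. When all $k_i\ge2$, every $\pm1$ letter is flanked by $0$'s or by the endpoint $0$. So each weight-one left factor is either $I^{\mathfrak L}(0;\pm1;0)=0$, or $I^{\mathfrak L}(a;0;c)$ with $a,c\in\{0,\pm1\}$, which is $\log^{\mathfrak L}(\pm1)=0$ or regularises to $0$. It vanishes term by term, with no pairing needed.

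The genuine gap is the $D_{2r+1}$ step, which you state only as a goal. The mechanism you propose for it (shuffle regularisation plus an ``$\omega_\pm\omega_\pm$-collapse'') cannot work. Write the word as $\eta_1 0^{k_1-1}\eta_2 0^{k_2-1}\cdots \eta_d 0^{k_d-1}$.
\begin{enumerate}
\item A cut bounded on the left by a letter $\eta_i$ and on the right by a letter $\eta_j$ leaves $\eta_i\eta_j$ adjacent, i.e.\ an entry $1$ in the quotient index.
\item A cut from $\eta_i$ to the endpoint $1$ leaves a quotient ending in $\eta_i$.
\end{enumerate}
Such quotients are in general ramified (compare $t(1,2)$, which involves $\pi^2\log 2$, or $I(0;-1;1)=\log 2$). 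So no rewriting of the right-hand factors puts them in $\mathcal{H}^{(1)}$; one must show that their \emph{left} coefficients vanish in $\mathcal{L}$. Contrary to your remark, the left factors are precisely what needs control.

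The required facts are these.
\begin{enumerate}
\item $I^{\mathfrak L}(a;w;a)=0$.
\item $I^{\mathfrak L}(\eta;w;-\eta)$, summed over the signs of the interior letters (all of which carry the symmetric weight), vanishes. One way to see this: the symmetrised letters are pull-backs under $s=t^2$, which turns a path from $\eta$ to $-\eta$ into a loop at $s=1$, and such integrals die in $\mathcal{L}$.
\item For a cut from the endpoint $0$ up to $\eta_j$, the quotient starts with a symmetric letter. You need $I^{\mathfrak L}(0;w;\eta_j)=I^{\mathfrak L}(0;\eta_j w;1)$, together with the antisymmetric weight on $\eta_1$, to see that the left factor is $\eta_j$ times a constant (a rational multiple of $t^{\mathfrak L}(k_1,\ldots,k_{j-1})$). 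This restores the antisymmetric first letter and makes the quotient $t^{\mathfrak m}(k_j,\ldots,k_d)$.
\end{enumerate}
With these facts, the remaining cuts (those with a $0$ letter at one boundary) give lower-weight maximal-height $t^{\mathfrak m}$'s by the same scaling symmetry, and the induction closes. Without them, the proposal only restates what has to be proved.
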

	
	Murakami proved this result by abstract means, using the motivic Galois descent criterion developed by Glanois \cite{GlanoisThesis16, GlanoisBasis16}, without yielding an explicit expression for such M$t$V's via classical MZV's.  Our main result, \autoref{thm:main:desc}, gives an explicit formula from a more elementary standpoint.\medskip
	
	We briefly recall the motivic side of the story.  Multiple polylogarithms admit a straightforward representation via the \emph{iterated integrals}, which are defined by
	\begin{equation}\label{eqn:intro:itint}
	I(z_{0};z_{1},\ldots,z_{n};z_{n+1})\coloneqq\int_{z_{0}<t_{1}<t_{2}<\cdots<t_{n}<z_{n+1}}\frac{\dd t_{1}}{t_{1}-z_{1}}\frac{\dd t_{2}}{t_{2}-z_{2}}\cdots\frac{\dd t_{n}}{t_{n}-z_{n}} \,.
	\end{equation}
	In this notation
	\[
		\Li_{k_1,\ldots,k_d}(x_1,\ldots,x_d) = (-1)^d I(0; y_1, \{0\}^{k_1-1}, y_2, \{0\}^{k_2-1}, \ldots, y_d, \{0\}^{k_d-1}; 1) \,,
	\]
	where \( y_i = 1 \big/ \prod_{j=i}^d x_j \), for \( 1 \leq j \leq d \), and \( \{a\}^n \) denotes the string \( {a, \ldots, a} \) with \( n \) repetitions.  This extends the domain of definition of multiple polylogarithms to the universal coverings of the punctured projective line and allows one to lift multiple polylogarithms to motivic versions \cite{GoncharovMultiple01,BrownMotivicPeriodNotes}.  For \( x_i = 1 \), one obtains MZV's, which are well-known \cite{BrownMTM12} to be unramified periods of mixed Tate motives over \( \mathbb{Q} \).  The AMZV's, corresponding to \( x_i = \pm 1 \), are periods of mixed Tate motives over \( \mathbb{Q} \), where ramification at the prime 2 is allowed.  Motivic multiple zeta values (including cyclotomic ones) form a Hopf algebra comodule, with a coaction \( \Delta \).  The weight-graded pieces \( D_r = (\pi_r \otimes \id) \circ \Delta' \) of the coaction are sufficient to keep track of all the information in the coaction. 
	
	Glanois \cite[Corollary 5.1.3]{GlanoisThesis16}, \cite[Corollary 2.4]{GlanoisBasis16} established a criterion to detect when a motivic alternating MZV \( \mathfrak{Z}\) descends to classical MZV's: namely, \( D_1 \mathfrak{Z} = 0 \) and \( D_{2r+1} \mathfrak{Z} \) is already level 1.  Murakami used this criterion to inductively verify that \( t(k_1,\ldots,k_d) \), \( k_i \geq 2 \), is a linear combination of multiple zeta values, using the fact that \( t(2) = \frac{3}{4} \zeta(2) \) as the base of induction.  In a given case, one can obtain an expression by matching the coaction of the M$t$V with a linear combination of MZV's, in the Hoffman basis say.  However, the coaction cannot fix the rational multiple of \( \zeta(\wwt(\Bbbk)) \) in such a combination (cf. \cite[Theorem 2.4.4]{GlanoisBasis16} and \cite[Theorem 3.3]{BrownMTM12}), leaving a non-explicit identity overall.
	
	\medskip
	To precisely state our formula, we need to introduce some notation. 
	
	\subsection*{Normalized values}  Recall the interpolated multiple zeta values \cite{YamamotoInterpolation13}), defined by
	\begin{equation}\label{eqn:zint}
		\zeta^r(k_1,\ldots,k_d) = \sum_{0 < m_1 \leq m_2 \leq \cdots \leq m_d} \frac{r^{d - \#\{m_1,\ldots,m_d\}}}{m_1^{k_1} m_2^{k_2} \cdots m_d^{k_d}} \,.
	\end{equation}
	The case \( r = 1 \) defines the multiple zeta-star values, while the case \( r = 0 \) (with the convention \( 0^0= 1 \)) reduces to the usual multiple zeta values.  Generally \( \zeta^r \) interpolates between these values; the midway point \( r = \half \), which defines the multiple zeta-half values \( \zeta^\half \), plays an important role.  To simplify our formula, it is convenient to introduce the following rescalings,
	\begin{align*}
		\ztw^\half(\Bbbk) \coloneqq  2^{\ddp(\Bbbk)} \zeta^\half(\Bbbk) \,, \quad 
		\ttw(\Bbbk) \coloneqq 2^{\wwt(\Bbbk)} t(\Bbbk) \,.
	\end{align*}

	\subsection*{Block decomposition} We also need the notation of the block decomposition of an index (cf. \cite{CharltonBlock21}, \cite[\S2.2]{CharltonThesis}).  Let 
	\[
	\mathbb{I}\coloneqq \{ (k_{1},\ldots,k_{d}) \mid d\geq0,k_{1},\ldots,k_{d}\in\mathbb{Z}_{\geq1},k_{d} \geq2 \} 
	\]
	be the set of admissible indices and let
	\[
	\mathbb{W}_{0,1}\coloneqq \{ (a_{1},\ldots,a_{k}) \mid k \geq0,a_{1},\ldots,a_{k}\in\{0,1\}, a_{1} = 1, a_{k} = 0 \} 
	\subsetneq \smash[t]{\bigcup_{k=0}^{\infty}} \{0,1\}^{k} \,,
	\]
	be the set of binary words starting with 1 and ending with 0.
	We recall two bijective mappings $\lambda,\beta:\mathbb{W}_{0,1}\rightarrow\mathbb{I}$, called the \emph{index} and \emph{block index} respectively.
	For \( \vec{s} = (a_1,\ldots,a_k) \), the \emph{index} \( \lambda(\vec{s}) \) is defined to be the sequence of lengths of consecutive subsequences of \( \vec{s} \), obtained by splitting between $a_i$ and $a_{i+1}$ whenever \( a_{i+1} = 1\).  For example, if \( \vec{s}= (1,0,1,0,0,1,0,0,0,1,0,0,0,0)\), we find
	\[
		\lambda(\vec{s}) = \lambda(\overbrace{1,0}^{2}|\overbrace{1,0,0}^{3}|\overbrace{1,0,0,0}^{4}|\overbrace{1,0,0,0,0}^{5})=(2,3,4,5) \,.
	\]
	For \( \vec{s} = (a_1,\ldots,a_k) \), the \emph{block index} \( \beta(\vec{s}) \) is defined to be the sequences of lengths of consecutive subsequences of \( \vec{s}' \coloneqq (a_2,\ldots,a_k,1) \) (recall: \( a_1 = 1 \)), obtained by splitting between $a_i$ and $a_{i+1}$ whenever \( a_{i+1} = a_i \).  As before, we find
	\[
	\beta(\vec{s}) = \beta(\underset{\mathclap{\substack{\uparrow \\ \text{removed}}}}{\cancel{\,1\,}},\overbrace{0,1,0}^{\text{$3$}}|\overbrace{0,1,0}^{3}|\overbrace{0}^{1}|\overbrace{0,1,0}^{3}|\overbrace{0}^{1}|\overbrace{0}^{1}|\overbrace{0,\underset{\mathclap{\substack{\uparrow \\ \text{inserted}}}}{\fbox{1}}}^{\text{$2$}}) = (3,3,1,3,1,1,2) \,.
	\]
	Notice that the maps \( \lambda \) and \( \beta \) are both invertible, as their inverses can easily be written down.  We then define the \emph{block decomposition} to be \( \bl = \beta \circ \lambda^{-1} \).  From the previous examples, we find
	\[
	\bl(2,3,4,5)=(3,3,1,3,1,1,2) \,.
	\]

	With the map \( \bl \) defined, our first identity is the following.
	
\begin{Thm}[Convolution formula] \label{thm:main:conv}
	For $d\geq1$ and integers $k_{1},\ldots,k_{d}\geq2$, we have the following (note the reversed arguments in \( \zeta^\star \) and \( \bl \))
	\[
		- \ztw^\half(\bl(k_d,\ldots,k_1)) = \sum_{i=0}^d (-1)^i \ttw(k_1,\ldots,k_i) \zeta^\star(k_d,\ldots,k_{i+1}) \,.
	\]
	\end{Thm}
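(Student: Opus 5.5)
The plan is to prove the identity via generating functions, working in the algebra of formal iterated integrals/series and then specialising. The right-hand side is a convolution of two "quasi-shuffle type" sums. First I will interpret $\sum_{i}(-1)^i \ttw(k_1,\ldots,k_i)\zeta^\star(k_d,\ldots,k_{i+1})$ as a single nested sum. Write $\ttw(k_1,\ldots,k_i)=\sum_{0<\ell_1<\cdots<\ell_i,\ \ell_j \text{ odd}}\prod_j (\ell_j/2)^{-k_j}$, so $\ttw$ is a sum over half-odd integers $n_j=\ell_j/2\in\mathbb{Z}_{\geq0}+\tfrac12$. Also write $\zeta^\star(k_d,\ldots,k_{i+1})=\sum_{n_d\geq\cdots\geq n_{i+1}\geq 1}\prod n_j^{-k_j}$; with the reversal this becomes an increasing chain in the reading order of the $k$'s.

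The alternating sum over $i$ then telescopes in the standard way. Consider sums over $n_1,\ldots,n_d\in\tfrac12\mathbb{Z}_{>0}$, where the first $i$ variables are half-odd and increasing and the remaining ones are integers. The sign $(-1)^i$ and the antipode-type relation between $\zeta$ and $\zeta^\star$ (the identity $\sum_i(-1)^i\zeta(\ldots)\zeta^\star(\ldots)=0$) suggest that all terms cancel except a boundary contribution. I will therefore set up the finite truncations
\[
T_N(\Bbbk)=\sum_{\substack{0<n_1<\cdots<n_i<N\\ n_j\in\mathbb{Z}+\half}}\ \ \sum_{\substack{N\leq n_{i+1}\leq\cdots\leq n_d\\ n_j\in\mathbb{Z}}}(\cdots).
\]
I expect that the convolution equals a sum over mixed chains in $\tfrac12\mathbb{Z}$ in which consecutive strict and non-strict inequalities alternate according to parity. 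Expanding these mixed chains, terms where two consecutive variables coincide get weight $1/2$ from averaging. This is exactly where $\zeta^{\half}$ with the factor $2^{\ddp}$ should appear.

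The hard step is identifying the resulting object with $-\ztw^\half$ evaluated at the block decomposition $\bl(k_d,\ldots,k_1)$. I will not attempt this by direct series manipulation. Instead I plan to use the iterated-integral (or iterated beta integral) representations. I will write $\ttw$ as an iterated integral on $(0,1)$ with forms $\frac{2\,\dd t}{1-t^2}$ and $\frac{\dd t}{t}$, i.e. substitute $t\mapsto t^2$ to convert the odd-denominator sum. I will write $\zeta^\star$ via its integral with forms $\frac{\dd t}{t}$ and $\frac{\dd t}{1-t}$ over the reversed word. The convolution $\sum_i(-1)^i I(0;w_1;x)I(x;w_2;1)$ collapses to a single iterated integral by path composition and reversal. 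After the change of variables $t=\frac{1-u}{1+u}$-type, the word $\lambda^{-1}(k_d,\ldots,k_1)$ should be rewritten so that runs of alternating letters become blocks. This is precisely the definition of $\beta$, and the known block-integral representation of $\zeta^\half$ (Charlton's block decomposition, where $\ztw^\half(\bl(\cdot))$ is an iterated integral with forms $\frac{\dd t}{t}\pm$ symmetric combinations) should then finish the argument.

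The main obstacle is therefore the combinatorial matching of the alternating sum of products of integrals with the single integral in block form, including the overall sign $-1$ and the powers of 2. I will handle it by induction on $d$. I will differentiate both sides as functions of an upper endpoint (or in a generating-function variable attached to $k_d$). I will then check that both sides satisfy the same first-order recursion when $k_d$ is decreased by one (which shortens the last block) and when $k_d=2$ is removed (which merges or removes blocks). The base case $d=1$ is a direct check: it reads $-\ztw^\half(\bl(k))=\ttw(k)\cdot 1$ up to the $i=0$ term $\zeta^\star(k)$, which follows from $t(k)=(1-2^{-k})\zeta(k)$.
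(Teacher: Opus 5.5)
There is a genuine gap: the step that proves the identity is left to mechanisms that do not work as stated. First, the convolution does not ``collapse to a single iterated integral by path composition''. Put $W=\omega\,e_0^{k_1-1}e_1e_0^{k_2-1}\cdots e_1e_0^{k_d-1}(e_1-e_0)$, where $\omega=\frac{\dd t}{\sqrt t\,(t-1)}$ is the square-root form carried by $\ttw$. Then the $i$-th term $(-1)^i\ttw(k_1,\ldots,k_i)\zeta^\star(k_d,\ldots,k_{i+1})$ equals $I(0;W_1;1)\,I(1;W_3;\infty)$, where $W=W_1\,x\,W_3$ and $x$ is the $(i+1)$-st letter of $W$ with a pole at $t=1$. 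So in each term one pole letter is deleted, and no splitting inside the runs $e_0^{k_j-1}$ ever occurs. Composing paths through $t=1$ gives instead the full deconcatenation sum $\sum_{W_1W_3=W}I(0;W_1;1_-)I(1_+;W_3;\infty)$, which is a different object (the paper's $S_0$). The convolution appears only as the coefficient of $-\ii\pi$ when the path from $0$ to $\infty$ is indented around $t=1$. On the small half-circle only single pole letters contribute, precisely because $k_j\ge2$ keeps pole letters non-adjacent. One then still needs a separate reason for $S_0=0$. The paper gets it by substituting $u=\sqrt t$ and using $u\mapsto-u$ to show $I(0;W;-\infty)\in\ii\mathbb{R}$, while $S_0\in\mathbb{R}$.

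Second, and more seriously, you have no mechanism connecting this $\omega$-integral to $\ztw^\half(\bl(k_d,\ldots,k_1))$. A substitution such as $t=\frac{1-u}{1+u}$ acts letter by letter. But the block-form integral for $\ztw^\half$ (the paper's \eqref{eq:zeta_half_int}) uses forms $f_{a_{i-1},a_i}$ that depend on pairs of consecutive letters: $e_0$ at an alternation, $2e_1-e_0$ at a repeated $0$. That representation is only the starting point, not the bridge, and the $\omega$-integral moreover contains a level-2 form. The paper's bridge runs as follows. The quadratic substitution $u=-(t-1)^2/(4t)$ turns each $f_{a,b}$ into the beta form $\ff{a,b}{\half,\half}$. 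Next comes normalization at $\alpha_0=1$ via the residue formula. Then Hirose--Sato translation invariance (\autoref{thm:beta:translation}) with $c=-\half$ sends $\ff{a_{i-1},a_i}{\half,\half}$ to $\ff{a_{i-1},a_i}{0,0}=e_{a_i}$ and the first slot to $\omega$. Finally, the limit $\alpha\to\half^+$ with $B(\half,\half)=\pi$ produces the factor $-1/(\ii\pi)$, which also accounts for the overall sign you flagged. You mention iterated beta integrals but never invoke translation invariance, and that is the essential non-elementary input.

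The fallback induction does not fill the gap either. Both sides are numbers, and you exhibit no function of an endpoint or generating variable for which both sides satisfy the same recursion. Also $\bl$ changes non-locally: $\bl(\{2\}^{a+b+1})=(2a+2b+2)$ but $\bl(\{2\}^a,3,\{2\}^b)=(2a+1,2b+2)$, so no matching recursion on the $\zeta^\half$ side is apparent.

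Smaller points:
\begin{itemize}
\item For $k\ge3$ the base case amounts to $\zeta^\half(\{1\}^{k-2},2)=2(1-2^{1-k})\zeta(k)$. This needs more than $t(k)=(1-2^{-k})\zeta(k)$; for example, the sum formula gives $\zeta^r(\{1\}^n,2)=\sum_{j=0}^n r^j\zeta(n+2)$.
\item With the paper's convention, $\zeta^\star(k_d,\ldots,k_{i+1})$ sums over $n_d\le\cdots\le n_{i+1}$, the reverse of what you wrote. So your truncation $T_N$ is not the right telescoping set-up.
\end{itemize}
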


	For example, since \(  \bl(2,3,4,5)=(3,3,1,3,1,1,2) \), we have 
	\begin{align*}
	{-}&\ztw^\half(3,3,1,3,1,1,2) = \\
	& \zeta^\star(2,3,4,5) - \ttw(5) \zeta^\star(2,3,4) + \ttw(5, 4) \zeta^\star(2,3) - \ttw(5,4,3) \zeta^\star(2) + \ttw(5,4,3,2) \,.
	\end{align*}
	This equality is easily verified numerically, with both sides equaling \( \approx -25.79239988\ldots \).  In weight \( \leq12 \) one can even check the results symbolically with the Multiple Zeta Value Data Mine \cite{mzvDM}.
		
	\begin{Rem}[Formula for alternating zeta-half values]
		\label{rem:twoone}
		We note that there is a similar formula relating certain alternating zeta-half values directly to zeta-star values.  Let
		\[
			B(s_1,\ldots,s_k) = ((-1)^{s_1+1} \diamond s_1, \ldots, (-1)^{s_k+1} \diamond s_k) \,,
		\]
		where \( 1 \diamond x = x \) and \( -1 \diamond x = \overline{x} \), for \( x \in \mathbb{Z}_{\geq1} \).  That is to say, \( B(s_1,\ldots,s_k) \) inserts a bar over the even arguments, and leaves the odd arguments unchanged.  Then Zhao's generalized two-one theorem \cite{ZhaoIdentity16} (via the formulation given in \cite{CharltonCyclic20}) states
		\[
			\ztw^\half(B(\bl(k_1,\ldots,k_d)) = \eps_{k_1} \zeta^\star(k_1,\ldots,k_d) \,, \quad \text{ with } \quad \eps_{k_1} = \begin{cases} \phantom{-}1, & \text{if $k_1 = 1$,} \\ -1, & \text{if $k_1 > 1$} \,. \end{cases}
		\]
		For example, since \( \bl(2,3,4,5)=(3,3,1,3,1,1,2) \), we obtain \(
			\ztw^\half(3,3,1,3,1,1,\overline{2}) = \zeta^\star(2,3,4,5) 
		\).
	\end{Rem}
		
	Returning to the convolution formula \autoref{thm:main:conv}, we can obtain an expression for \( \ttw(k_1,\ldots,k_d) \), via multiple zeta values.  	\Autoref{thm:main:conv} is equivalent to the following matrix equality
	{\small
		\arraycolsep=0.5ex
		\def\arraystretch{1}
		\begin{align*}
		& \!\! \left[ \! \begin{array}{c}
		 -\ztw^\half(\bl(k_{d},\ldots,k_{1})) \\
		\vphantom{\vdots} -\ztw^\half(\bl(k_{d-1},\ldots,k_{1})) \\
		\vphantom{\vdots} \vdotsB \\
		\vphantom{\vdots} -\ztw^\half(\bl(k_{1}))\\
		\vphantom{\vdots} 1
		\end{array} \! \right] 
		=\left[\begin{array}{ccccc}
		  \vphantom{\ztw^\half} 1 \,\, & \zeta^\star(k_{d}) \,\, & \zeta^\star(k_{d},k_{d-1}) & \!\cdots\! & \zeta^\star(k_{d},\ldots,k_{1})\\
		\vphantom{\vdots} & 1 & \zeta^\star(k_{d-1}) & \!\cdots\! & \zeta^\star(k_{d-1},\ldots,k_{1})\\
		\vphantom{\vdots} &  & 1 & & \vdotsB \\
		\vphantom{\vdots} &  &  & \!\ddotsB\! & \zeta^\star(k_{1})\\
		\vphantom{\vdots} 0 &  &  &  & 1
		\end{array}\right]
		\!\! \left[ \! \begin{array}{c}
		  \vphantom{\ztw^\half} (-1)^{d}\ttw(k_{1},\ldots,k_{d})\\
		\vphantom{\vdots} (-1)^{d-1}\ttw(k_{1},\ldots,k_{d-1})\\
		\vphantom{\vdots} \vdotsB \\
		\vphantom{\vdots} -\ttw(k_{1})\\
		\vphantom{\vdots} 1
		\end{array} \! \right] \!.
		\end{align*}}%
    For any \( \Bbbk = (k_1,\ldots,k_d) \), the ``stuffle-antipode'' identity \( \sum_{i=0}^d (-1)^i \zeta(k_1,\ldots,k_i) \zeta^\star(k_d,\ldots, k_{i+1}) = 0 \) holds (see \cite[Lemma 3.3]{GlanoisBasis16} or \cite[Theorem 1.3]{CharltonHoffmanSym}).  Hence, the inverse of the zeta-star matrix is
    {\small
		\arraycolsep=0.5ex
		\def\arraystretch{1}
    \begin{overlong}
     \!\! \left[\begin{array}{ccccc}
		  \vphantom{\ztw^\half} 1 \,\, & \zeta^\star(k_{d}) \,\, & \zeta^\star(k_{d},k_{d-1}) & \!\cdots\! & \zeta^\star(k_{d},\ldots,k_{1})\\
		\vphantom{\vdots} & 1 & \zeta^\star(k_{d-1}) & \!\cdots\! & \zeta^\star(k_{d-1},\ldots,k_{1})\\
		\vphantom{\vdots} &  & 1 & & \vdotsB \\
		\vphantom{\vdots} &  &  & \!\ddotsB\! & \zeta^\star(k_{1})\\
		\vphantom{\vdots} 0 &  &  &  & 1
		\end{array}\right]^{\text{\small $-1$}} \!\!\!
         =
        \left[\begin{array}{ccccc}
		  \vphantom{\ztw^\half} 1 \,\, & (-1)^1  \zeta(k_{d}) \,\, & (-1)^2  \zeta(k_{d-1},k_{d}) & \!\cdots\! & (-1)^d \zeta(k_{1},\ldots,k_{d})\\
		\vphantom{\vdots} & 1 & (-1)^1  \zeta(k_{d-1}) & \!\cdots\! & (-1)^{d\!{-}\!1}  \zeta(k_{1},\ldots,k_{d-1})\\
		\vphantom{\vdots} &  & 1 & & \vdotsB \\
		\vphantom{\vdots} &  &  & \!\ddotsB\! & (-1)^1  \zeta(k_{1})\\
		\vphantom{\vdots} 0 &  &  &  & 1
		\end{array}\right] \!.
    \end{overlong}}
        
    In particular, we have following matrix equality
    {\small
		\arraycolsep=0.5ex
		\def\arraystretch{1}
       \begin{overlong}
         \!\! \left[ \! \begin{array}{c}
		\vphantom{\ztw^\half} (-1)^{d}\ttw(k_{1},\ldots,k_{d})  \\
		\vphantom{\vdots} (-1)^{d-1}\ttw(k_{1},\ldots,k_{d-1}) \\
		\vphantom{\vdots} \vdotsB  \\
		\vphantom{\vdots} -\ttw(k_{1}) \\
		\vphantom{\vdots} 1 
		\end{array} \! \right] 
          = 
		 \left[\begin{array}{ccccc}
		  \vphantom{\ztw^\half} 1 \,\, & (-1)^1 \, \zeta(k_{d}) \,\, & (-1)^2 \, \zeta(k_{d-1},k_{d}) & \!\cdots\! & (-1)^d \,\zeta(k_{1},\ldots,k_{d})\\
		\vphantom{\vdots} & 1 & (-1)^1 \, \zeta(k_{d-1}) & \!\cdots\! & (-1)^{d\!{-}\!1} \, \zeta(k_{1},\ldots,k_{d-1})\\
		\vphantom{\vdots} &  & 1 & & \vdotsB \\
		\vphantom{\vdots} &  &  & \!\ddotsB\! & (-1)^1 \, \zeta(k_{1})\\
		\vphantom{\vdots} 0 &  &  &  & 1
		\end{array}\right] \!\! \left[ \! \begin{array}{c}
		 -\ztw^\half(\bl(k_{d},\ldots,k_{1})) \\
		\vphantom{\vdots} -\ztw^\half(\bl(k_{d-1},\ldots,k_{1})) \\
		\vphantom{\vdots} \vdotsB \\
		\vphantom{\vdots} -\ztw^\half(\bl(k_{1}))\\
		\vphantom{\vdots} 1
		\end{array} \! \right] \!.
		\end{overlong}}%
	Extracting the identity from the first row, and multiplying by \( (-1)^d \), then gives the following theorem.
		
\begin{Thm}[Galois descent]\label{thm:main:desc}
	For $d\geq1$ and integers $k_{1},\ldots,k_{d}\geq2$, we have
    \[
        \ttw(k_{1},\ldots,k_{d}) = \zeta(k_1,\ldots,k_d) + \sum_{i=1}^d (-1)^{i-1} \zeta(k_{i+1}, \ldots, k_d) \ztw^\half(\bl(k_i,\ldots,k_1))
    \]
	In particular, the right-hand side is expressed purely via MZV's, hence
	the formula gives an explicit Galois descent for multiple $t$-values
	of maximal height. 
\end{Thm}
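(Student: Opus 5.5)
Given the convolution formula of \autoref{thm:main:conv}, \autoref{thm:main:desc} is a formal consequence of it, obtained by inverting a unitriangular system with the stuffle-antipode identity. The plan is to carry out that inversion directly rather than through matrices.

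For each $0\le j\le d$, apply \autoref{thm:main:conv} to the truncated index $(k_1,\ldots,k_j)$; all its entries are still $\geq2$. This gives
\[
-\ztw^\half(\bl(k_j,\ldots,k_1)) = \sum_{i=0}^{j} (-1)^i \ttw(k_1,\ldots,k_i)\,\zeta^\star(k_j,\ldots,k_{i+1}) \qquad (1\le j\le d),
\]
together with the trivial $j=0$ case $1=\ttw(\emptyset)$.

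Next, multiply the $j$-th identity by $(-1)^{d-j}\zeta(k_{j+1},\ldots,k_d)$ and sum over $j=0,\ldots,d$, where the $j=0$ term is read as $(-1)^d\zeta(k_1,\ldots,k_d)\cdot 1$. Interchange the order of summation on the right. For fixed $i$, the coefficient of $(-1)^i\ttw(k_1,\ldots,k_i)$ is
\[
\sum_{j=i}^{d} (-1)^{d-j}\,\zeta(k_{j+1},\ldots,k_d)\,\zeta^\star(k_j,\ldots,k_{i+1}).
\]
This is precisely the stuffle-antipode sum for the index $(k_{i+1},\ldots,k_d)$, up to reversal of the summation variable and an overall sign $(-1)^{d-i}$. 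It therefore vanishes unless $i=d$, in which case it equals $1$. The whole right-hand side thus collapses to $(-1)^d\ttw(k_1,\ldots,k_d)$.

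On the left-hand side we get
\[
(-1)^d\zeta(k_1,\ldots,k_d) - \sum_{j=1}^d (-1)^{d-j}\zeta(k_{j+1},\ldots,k_d)\,\ztw^\half(\bl(k_j,\ldots,k_1)).
\]
Multiplying through by $(-1)^d$ gives exactly the stated formula, since $-(-1)^{-j}=(-1)^{j-1}$.

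Finally, to justify the phrase ``expressed purely via MZV's'', note that $\zeta^\half$ of an admissible index is a $\mathbb{Q}$-combination of MZV's. Indeed, expanding the non-strict inequalities in \eqref{eqn:zint} writes $\zeta^\half$ as a weighted sum of MZV's of contracted indices. The index $\bl(k_j,\ldots,k_1)$ is admissible because its last block has length $\geq2$.

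The main obstacle is index bookkeeping. One must check that the orderings of $\zeta$ (increasing indices $k_{j+1},\ldots,k_d$) and $\zeta^\star$ (decreasing indices) match the stuffle-antipode identity $\sum_{i}(-1)^i\zeta(\ell_1,\ldots,\ell_i)\zeta^\star(\ell_m,\ldots,\ell_{i+1})=0$ with $\ell=(k_{i+1},\ldots,k_d)$. One must also check that the empty-index convention makes the $i=d$ diagonal term equal to $1$. Everything else is routine sign-tracking.
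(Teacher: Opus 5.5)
Your argument is the paper's argument without the matrices. The paper also applies \autoref{thm:main:conv} to every truncation $(k_1,\ldots,k_j)$, treats the result as an upper unitriangular system with $\zeta^\star$-entries, inverts it using the stuffle-antipode identity, and reads off the first row. Your sign bookkeeping, the $j=0$ convention, the diagonal value $1$, and the admissibility remark are all correct.

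There is one gap, and it sits in the step you flagged as the main obstacle: the vanishing of your coefficient is not justified by the identity you quote. Write $\ell=(\ell_1,\ldots,\ell_m)=(k_{i+1},\ldots,k_d)$ and $q=j-i$. Then your coefficient of $(-1)^i\ttw(k_1,\ldots,k_i)$ is
\[
\sum_{q=0}^{m}(-1)^{m-q}\,\zeta(\ell_{q+1},\ldots,\ell_m)\,\zeta^\star(\ell_q,\ldots,\ell_1),
\]
which has $\zeta$ on a tail and $\zeta^\star$ on the reversed head. The quoted identity $\sum_{p}(-1)^p\zeta(\ell_1,\ldots,\ell_p)\zeta^\star(\ell_m,\ldots,\ell_{p+1})=0$ has $\zeta$ on a head and $\zeta^\star$ on the reversed tail. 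No reindexing turns one into the other, and for $m\geq 3$ they are different expressions. So the orderings do not match, and the check you describe would fail.

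What you actually need is the mirror identity
\[
\sum_{q=0}^{m}(-1)^q\,\zeta^\star(\ell_q,\ldots,\ell_1)\,\zeta(\ell_{q+1},\ldots,\ell_m)=0 \qquad (m\geq1).
\]
It is true, and there are two ways to get it:
\begin{enumerate}
\item It is the other antipode axiom $\sum_{uv=w}S(u)*v=0$ in the stuffle Hopf algebra; the quoted identity is $\sum_{uv=w}u*S(v)=0$. Here $S$ sends a word of length $n$ to $(-1)^n$ times the sum of all contractions of its reversal.
\item Let $Z^\star$ be the $\zeta^\star$-matrix and $Z$ the signed $\zeta$-matrix from the introduction. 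The quoted identity, applied to all consecutive sub-indices, says exactly $Z^\star Z=I$. Your coefficients are precisely the first-row entries of $ZZ^\star$, and $ZZ^\star=I$ because a one-sided inverse of a square unitriangular matrix is two-sided.
\end{enumerate}
The paper's matrix formulation handles this point silently when it inverts the system. With either fix inserted, your proof is complete.
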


	\begin{Rem}[Evaluation of \( t(2, \ldots, 2, 3, 2, \ldots, 2) \)]
		Since \( \bl(\{2\}^a, 3, \{2\}^b) = (2a + 1, 2b+2) \), we note that Murakami's hypergeometric evaluation of \( t(2, \ldots, 2, 3, 2, \ldots, 2) \) \cite[Theorem 3]{MurakamiMtVs21} can be derived readily from \autoref{thm:main:conv}.   This evaluation is the key input to the proof that \( t(k_1,\ldots,k_d) \), \( k_i \in \{ 2, 3 \} \), is a basis for motivic multiple \emph{zeta} values \cite[Theorem 2]{MurakamiMtVs21}. 
		
		To show this evaluation, use \autoref{thm:main:conv} to write a generating series identity for \( t(2, \ldots, 2, 3, 2, \ldots, 2) \) in terms of the generating series for \( \ztw^\half(2a+1, 2b+2) \) and for \( \zeta^\star(2, \ldots, 2, 3, 2, \ldots, 2) \).  Use the parity theorem, given explicitly in \cite[\S3]{PanzerParity16}, to reduce \( \ztw^\half(2a+1, 2b+2) \) to a polynomial in single zeta values as a generating series identity; together with Zagier's generating series evaluation of \( \zeta^\star(2, \ldots, 2, 3, 2, \ldots, 2) \) \cite[Theorem 4, proof]{zagier2232}, one obtains the required expression for the M$t$V generating series.  Other proofs of Murakami's evaluation are already known, going first by an evaluation for \( t^\star(2, \ldots, 2, 3, 2, \ldots, 2) \) in \cite[Theorem 4.4, and (4.5)]{ligen24} (using another variant of a two-one formula for M$t$V's in terms of Mt$\half$V's) and then via the usual ``stuffle-antipode" considerations \cite[Proposition 4.3]{liLevel24} to obtain an evaluation for \( t(2, \ldots, 2, 3, 2, \ldots, 2) \).
	\end{Rem}

	Finally, let us take a moment to understand the structure of \( \bl(k_1,\ldots,k_d) \), when \( \Bbbk = (k_1,\ldots,k_d) \) has maximal height.  Since \( k_1,\ldots,k_d \geq 2 \), we can uniquely write
	\[
		\Bbbk = (\{2\}^{a_0}, 2+b_1, \{2\}^{a_1}, 2 + b_2, \ldots, \{2\}^{a_{r-1}}, 2 + b_r, \{2\}^{a_r} \} \,, \quad a_0,\ldots,a_r \geq 0, b_1,\ldots,b_r \geq 1 \,.
	\]
	So the corresponding 01-sequence \( \lambda^{-1}(\Bbbk) \in \mathbb{W}_{0,1} \) is given by
	\[
		\vec{s} = \lambda^{-1}(\Bbbk) = (\{1,0\}^{a_0} \mid 1, \{0\}^{b_1+1} \mid \{1,0\}^{a_1} \mid 1, \{0\}^{b_2+1} \mid \cdots \mid \{1,0\}^{a_{r-1}} \mid 1, \{0\}^{b_r+1} \mid \{ 1, 0 \}^{a_r}) \,.
	\]
	One then has
	\begin{align*}
		\vec{s}' &= (0, \{1,0\}^{a_0} \mid \{0\}^{b_1} \mid \{1,0\}^{a_1} \mid 1, \{0\}^{b_2+1} \mid \cdots \mid \{1,0\}^{a_{r-1}} \mid 1, \{0\}^{b_r+1} \mid \{ 1, 0 \}^{a_r}, 1) \\
		&= (0, \{1,0\}^{a_0} \mid \{0\}^{b_1-1} \mid 0, \{1,0\}^{a_1+1} \mid \{0\}^{b_2-1} \mid 0, 1, \ldots, 0 \mid 0, \{1,0\}^{a_{r-1}+1} \mid \{0\}^{b_r-1} \mid \{ 0, 1\}^{a_r+1}) \,,
	\end{align*}
	whence one can read off the block index \( \beta(\vec{s}) \) as
	\[
		\bl(\Bbbk) = \beta(\vec{s}) = \begin{cases}
			(2a_0+1, \{1\}^{b_1-1}, 2a_1 + 3, \{1\}^{b_2-1}, \ldots, 2a_{r-1} + 3, \{1\}^{b_r-1}, 2a_r+2) & \text{if $r \geq 1$,} \\
			(2a_0) & \text{if $r=0$.}
		\end{cases}
	\]
	We see that the ``maximal-height'' condition \( k_1,\ldots,k_d \geq 2 \) corresponds to the condition that \( \bl(\Bbbk) \) has only odd entries, except for the last entry which is always even.  In other words, \( \bl(\mathbb{I}_{\mathrm{m.h.}}) = \mathbb{I}_{\ooe} \), where
	\begin{align*}
		\mathbb{I}_{\mathrm{m.h.}} & \coloneqq \{ (k_1,\ldots,k_d) \in \mathbb{I} \mid d \geq 1, k_1,\ldots,k_d \geq 2 \} \\
		\mathbb{I}_{\ooe} & \coloneqq \{ (\ell_1,\ldots,\ell_r) \in \mathbb{I} \mid r \geq 1, \text{$\ell_1,\ldots,\ell_{r-1}$ odd, and $\ell_r$ even} \} \,.
	\end{align*}
	In particular, \autoref{thm:main:conv} and the formula in \autoref{rem:twoone} are disjoint.  Moreover, if \( k_1,\ldots,k_d \geq 3 \), then
	\[
		\bl(k_1,\ldots,k_d) = (1, \{1\}^{k_1-3}, 3, \{1\}^{k_2-3}, 3, \ldots, 3, \{1\}^{k_d-3}, 2) \,,
	\]
	so \( \bl(\mathbb{I}_{\geq3}) = \mathbb{I}_{1\{13\}2} \), where
	\begin{align*}
	\mathbb{I}_{\geq3} & \coloneqq \{ (k_1,\ldots,k_d) \in \mathbb{I} \mid d \geq 1, k_1,\ldots,k_d \geq 3 \} \subsetneq \mathbb{I}_{\mathrm{m.h.}} \\
	\mathbb{I}_{1\{13\}2} & \coloneqq \{ (\ell_1,\ldots,\ell_r) \in \mathbb{I} \mid r \geq 1, \ell_1=1, \ell_2,\ldots,\ell_{r-1} \in \{ 1,3\}, \ell_r = 2 \} \subsetneq \mathbb{I}_{\ooe} \,.
	\end{align*}

	\begin{Rem}[Problems for non-maximal height]
		As given, \autoref{thm:main:conv} and \autoref{thm:main:desc} cannot extend to the case when even a single \( k_i = 1 \) is permitted.  Already in the simplest case,
		\[
			\ttw(1,2) = 2^3 \cdot t(1,2) = -\frac{7}{2} \zeta(3) + \pi^2 \log(2) \,,
		\]
		so not admitting a Galois descent to level 1.  Allowing \( k_1 = 1 \) would also raise questions about regularization, as the terms \( \ztw^\half(\bl(k_i,\ldots,k_1)) \) could be divergent.  Nevertheless, it could be interesting to track through the appropriate modifications to the proof in \autoref{sec:mainproof}, in order to obtain a more general convolution identity.
	\end{Rem}
	
	The remainder of the paper is structured as follows.  In \autoref{sec:app} we give several direct and independent applications of the convolution formula \autoref{thm:main:conv} to the evaluation of multiple zeta-half values.  In particular, we evaluate \( \zeta^\half(1, \{ \{1\}^{a-3}, 3\}^{b-1}, \{1\}^{a-3}, 2) \) in \autoref{sec:app:z11312}, we evaluate \( \zeta^\half(\{1\}^n, 4) \) and \( \zeta^\half(3, \{1\}^{n-1}, 2) \) in \autoref{sec:app:z114}, we evaluate \( \zeta^\half(\{1\}^i, 3, \{1\}^j, 2) \), \( i+j \) even, in \autoref{sec:app:z113112}.  Finally we obtain an apparently new hypergeometric identity (which seems amenable to generalization in several possible ways) in \autoref{sec:app:hyp}.
	
	Then we turn to the details of the proof of \autoref{thm:main:conv}.  In \autoref{sec:beta} we recall the main properties of iterated integrals and how they can express multiple zeta values and multiple $t$-values.  We recall also the construction and properties of the iterated beta integrals introduced by Hirose and the third named author \cite{HiroseSatoBeta}, which will be used extensively.  In \autoref{sec:mainproof} we give the proof of \autoref{thm:main:conv} using the theory of iterated (beta) integrals, and hence establish the Galois descent formula for the multiple $t$ values \( t(k_1,\ldots,k_d) \), with \( k_1,\ldots,k_d \geq 2 \), of maximal height.

	\subsection*{Acknowledgements} We are grateful to Henrik Bachmann for identifying a further simplification of our original formulation of \autoref{thm:main:desc}.  
    
    SC and MEH are grateful to the Max Planck Institute for Mathematics, Bonn, for support, hospitality and excellent working conditions.  This work grew out of on-going discussions during MEH's extended stay at the MPIM amid the unusual circumstances of 2020.
	
	SC is grateful to the Japan Tourism Agency's ``Project to Promote the Attraction and Hosting of International Conferences at Universities'' for support to attend the ``17th MSJ-SI Developments of Multiple Zeta Values'' (10--22 February 2025). NS was supported by grant number 113-2115-M-002-007-MY3 from the National Science and Technology Council (R.O.C.); this grant funded a research visit for SC to NS at National Taiwan University (2--16 March 2025), where significant progress on this paper occurred. 
	
    MEH received partial support from the Naval Academy Research Council in 2021.  Views expressed in this paper are those of the authors
    and do not reflect the official policy of the U. S. Naval Academy, Department of the Navy, Department of War, or U. S. Government.

    \section{Applications to evaluations and identities}\label{sec:app}
    
    In this section we give several direct applications of the above theorems to the evaluation of multiple zeta-half values, by passing to more structured evaluations on the multiple $t$-value and multiple zeta-star value side.
    
    \subsection{Evaluation of \texorpdfstring{\( \zeta^\half(1, \{ \{1\}^{a-3}, 3\}^{b-1}, \{1\}^{a-3}, 2) \)}%
    	{zeta\textasciicircum{}½(1, \{ \{1\}\textasciicircum{}(a-3), 3\}\textasciicircum{}(b-1), \{1\}\textasciicircum{}(a-3), 2)}}
    \label{sec:app:z11312}

    Specialize \autoref{thm:main:conv} to the case where \( \Bbbk = (\{a\}^{b}) \), with \( a \geq 3, b \geq 1 \).  Write \( \rev(k_1,\ldots,k_d) \coloneqq (k_d,\ldots,k_1) \) for the reversed index.  Then \( \bl(\rev(\Bbbk)) = (1, \{ \{1\}^{a-3}, 3\}^{b-1}, \{1\}^{a-3}, 2) \), and we have
    \[
    - \ztw^\half(1, \{ \{1\}^{a-3}, 3\}^{b-1}, \{1\}^{a-3}, 2) = \sum_{i=0}^{b} (-1)^i \ttw(\{a\}^i) \zeta^\star(\{a\}^{b-i}) \,.
    \]
    For rescaling \( \ztw^\half \) and \( \ttw \) back to \( \zeta^\half \) and \(t \), note that \( \ddp(1, \{ \{1\}^{a-3}, 3\}^{b-1}, \{1\}^{a-3}, 2) = (a-2)b + 1 \), and \( \wwt(\{a\}^i) = ai \).  After doing this, and forming the generating series \(  \sum_{b=1}^\infty \bullet \,  x^{ab} \), we find
    \[
    	\sum_{b=1}^\infty 2^{1-2b} \zeta^\half(1, \{ \{1\}^{a-3}, 3\}^{b-1}, \{1\}^{a-3}, 2) x^{ab} = 1 - \sum_{i=0}^{\infty} (-1)^i t(\{a\}^i) x^{ai} \cdot  \sum_{i=0}^{\infty} \zeta^\star(\{a\}^{b-i}) \Big(\frac{x}{2}\Big)^{a(b-i)}    	
    \]
    We have following generating series (cf. \cite[Equation 11]{BBBkfold}, note we take the exponent of \( \lambda \) to be the MZV weight)
    \begin{equation}\label{eqn:zrep}
    \mathfrak{Z}_s(\lambda) \coloneqq \sum_{n=0}^\infty \zeta(\{s\}^n) \lambda^{sn} = \exp\Big\{ \sum_{k=1}^\infty \frac{(-1)^{k-1} \lambda^{sk}}{k} \zeta(sk) \Big\} \,.
    \end{equation}
    From the results in the last section of \cite{HoffmanIhara17}, we have the following generating series for MZSV's and M$t$V's with a repeating argument in terms of \( \mathfrak{Z}_s(\lambda) \), \begin{equation}\label{eqn:zrepgs}
    \begin{gathered}
    \sum_{n=0}^\infty \zeta^\star(\{s\}^n) x^{sn} = \frac{1}{\mathfrak{Z}_s(-x)} \,,  \quad 
    \sum_{n=0}^\infty t(\{s\}^n) x^{sn} = \frac{\mathfrak{Z}_s(x)}{\mathfrak{Z}_s(x/2)} \,.
    \end{gathered}
    \end{equation}
    Thus we have the following generating series identity:
    \[
     \sum_{b=1}^\infty 2^{1-2b} \zeta^\half(1, \{ \{1\}^{a-3}, 3\}^{b-1}, \{1\}^{a-3}, 2) x^{ab} =  1 - \frac{\mathfrak{Z}_a(-x)}{\mathfrak{Z}_a(-x/2)^2} \,.
   	\]
   	Note also that we have the following evaluation  (cf. \cite[Equation 34]{BBBkfold}) of \( \mathfrak{Z}_a(\lambda) \) when \( a = 2m \) is even:
   	\[
   	\mathfrak{Z}_{2m}(\lambda) = ( \ii \pi \lambda)^{-m} \prod_{j=1}^m \sin\Big( e^{(2j-1)  \ii \pi / (2m) }\pi \lambda \Big) \,.
   	\]
	We obtain the following proposition.
	\begin{Prop}   	
\begin{enumerate}
   	\item The following evaluation holds for \( a \geq 3, b \geq 1 \),
   	\[
   		 \zeta^\half(1, \{\{1\}^{a-3},3\}^{b-1}, \{1\}^{a-3}, 2) = 2^{2b-1} \cdot  [z^{ab}] \bigg( {-} \frac{\mathfrak{Z}_a(-z)}{\mathfrak{Z}_a(-z/2)^2} \bigg) \,.
   	\]
   	
   	\item  Moreover, the following evaluation holds if \( a = 2m \geq 4 \) is even, and \( b \geq 1 \),
	   \[
	   \zeta^\half(1, \{\{1\}^{2m-3},3\}^{b-1}, \{1\}^{2m-3}, 2) = 2^{2b-1} \cdot  [z^{2mb}] \bigg( \frac{\ii^{m+1} \pi^m z^m }{2^m} \prod_{j=0}^{m-1} \cot\Big( e^{j\mkern1mu \ii\pi / m} \frac{\pi z}{2} \Big) \! \bigg) \,.
	   \]
   \end{enumerate}
	\end{Prop}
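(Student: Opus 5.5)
Both parts are read off from the generating series identity just derived,
\[
 \sum_{b=1}^\infty 2^{1-2b} \zeta^\half(1, \{ \{1\}^{a-3}, 3\}^{b-1}, \{1\}^{a-3}, 2) x^{ab} = 1 - \frac{\mathfrak{Z}_a(-x)}{\mathfrak{Z}_a(-x/2)^2}\,.
\]
For (i), I would re-derive this identity carefully, since it carries the whole argument. Apply \autoref{thm:main:conv} with $\Bbbk=(\{a\}^b)$, so that $\bl(\rev(\Bbbk)) = (1,\{\{1\}^{a-3},3\}^{b-1},\{1\}^{a-3},2)$, as computed from the general description of $\bl$ on $\mathbb{I}_{\geq3}$. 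Undo the rescalings: the depth of the block index is $(a-2)b+1$, the weight of $(\{a\}^i)$ is $ai$, and the weight of $(\{a\}^{b-i})$ is $a(b-i)$. Multiplying by $2^{-ab}$ then turns the powers of $2$ into the substitutions $x\mapsto x$ for $t$, $x\mapsto x/2$ for $\zeta^\star$, and the factor $2^{1-2b}$ for $\zeta^\half$. The sign $(-1)^i$ is absorbed as $x^{a}\mapsto -x^{a}$; for $a$ odd this is $x\mapsto -x$, and for $a$ even I would write it as $\lambda^{a}\mapsto-\lambda^{a}$ in \eqref{eqn:zrep}, which is how $\mathfrak{Z}_a(-x)$ should be read. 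The $b=0$ term, equal to $1$, accounts for the constant. Inserting the two generating series \eqref{eqn:zrepgs} gives $\frac{\mathfrak{Z}_a(-x)}{\mathfrak{Z}_a(-x/2)}\cdot\frac{1}{\mathfrak{Z}_a(-x/2)}$. Extracting $[x^{ab}]$ for $b\geq1$ kills the constant $1$ and yields (i).

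For (ii), I would specialise to $a=2m$ and insert the product formula for $\mathfrak{Z}_{2m}$, writing $\omega_j=e^{(2j-1)\ii\pi/(2m)}$. The numerator contributes $(\ii\pi z)^{-m}\prod_j\sin(\omega_j\pi z)$. The denominator contributes $(\ii\pi z/2)^{-2m}\prod_j\sin^2(\omega_j\pi z/2)$. The powers combine to $(\ii\pi z)^m 2^{-2m}$, and the duplication formula $\sin 2x/\sin^2 x = 2\cot x$ turns each factor into $2\cot(\omega_j \pi z/2)$. This gives
\[
-\frac{\mathfrak{Z}_{2m}(-z)}{\mathfrak{Z}_{2m}(-z/2)^2} = -\frac{(\ii\pi z)^m}{2^m}\prod_{j=1}^m \cot\Big(\omega_j\frac{\pi z}{2}\Big),
\]
where the argument $-z$ is understood in the sense above, so the sign change $\lambda^{2m}\mapsto-\lambda^{2m}$ is realised by a rotation $z\mapsto e^{\ii\pi/(2m)}z$. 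Finally, $\{\omega_j\}$ must be converted into $\{e^{j\ii\pi/m}\}_{0\le j<m}$ via this rotation.

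The main obstacle is this last bookkeeping of unit factors. It involves the factor $e^{\ii\pi/2}=\ii$ coming from $z^m$ under the rotation, the oddness of $\cot$ used to replace any $\omega$ by $-\omega$, and the fact that the coefficient of $z^{2mb}$ is insensitive to rotations by $2m$-th roots of unity but not by $2m$-th roots of $-1$. Together these must produce exactly the prefactor $\ii^{m+1}$ in place of the naive $-\ii^m$. I would fix the conventions by checking the case $b=1$, $m=2$ directly against (i), namely against $\zeta^\half(1,1,2)$ via $t(4)$ and $\zeta^\star(4)$, and then carry the same sign tracking through the general case.
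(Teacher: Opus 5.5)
Your proposal is correct and follows the paper's route. You specialise \autoref{thm:main:conv} to $\Bbbk=(\{a\}^b)$, rescale, and insert the generating series \eqref{eqn:zrepgs} to obtain $1-\mathfrak{Z}_a(-x)/\mathfrak{Z}_a(-x/2)^2$; for $a=2m$ you then use the sine-product formula together with $\sin 2x/\sin^2 x=2\cot x$. Your reading of $\mathfrak{Z}_a(-x)$ as $\lambda^a\mapsto-\lambda^a$ for even $a$ is exactly what is needed, and the paper leaves it implicit. The unit-factor bookkeeping you list also closes: rotating by $\xi=e^{\ii\pi/(2m)}$ gives $\omega_j\xi=e^{j\ii\pi/m}$ for $1\le j\le m$ and $\xi^m=\ii$, and trading $e^{\ii\pi}=-1$ for $e^{0}=1$ via the oddness of $\cot$ contributes a factor $-1$, so $-\ii^m\cdot\ii\cdot(-1)=\ii^{m+1}$.
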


	For example, if \( m = 3 \), then since
	\begin{align*}
	& \frac{t^3 \pi^3}{8} \cot\Big( \frac{\pi t}{2} \Big)\cot\Big( e^{\ii\pi/3} \frac{\pi t}{2} \Big)\cot\Big( e^{2\ii\pi/3}  \frac{\pi t}{2} \Big) \\
	& = -1 + \frac{31 \pi^6}{30240} t^6 + \frac{40247\pi^{12}}{2615348736000} t^{12} + \frac{1595681 \pi ^{18}}{6386367771463680000} t^{18} + \cdots \,,
	\end{align*}
	we find for \( 1 \leq b \leq 3 \), 
	\begin{align*}
	\zeta^\half(\{1\}^4, 2) & = 2^1 \cdot \frac{31 \pi^6}{30240} = \frac{31}{16} \zeta(6) \,, \\
	\zeta^\half(\{1\}^4, 3, \{1\}^3, 2) & = 2^3 \cdot \frac{40247\pi^{12}}{2615348736000} = \frac{40247}{353792} \zeta(12) \,, \\
	\zeta^\half(\{1\}^4, 3, \{1\}^3, 3, \{1\}^3, 2) & = 2^5 \cdot \frac{1595681 \pi ^{18}}{6386367771463680000} = \frac{1595681}{224599040} \zeta(18) \,.
	\end{align*}
	Each of these can be readily verified with the MZV Datamine \cite{mzvDM}.
   	
   	\subsubsection*{Expression via complete symmetric functions} We now give a somewhat more explicit evaluation as a polynomial in single zeta values.  We recall \cite[\S2, p.~21 onwards]{MacdonaldBook} the polynomial \( Q_n(x_1,x_2,\ldots,x_n) \), which expresses the $n$-th complete symmetric function \( h_n \) in terms of power sums, so for example \( Q_3(x_1,x_2,x_3) = \frac{1}{6} (x_1^3 + 3x_1 x_2 + 2 x_3) \).  By \cite[(2.14')]{MacdonaldBook} and \cite[p.~28]{MacdonaldBook} we have
   	\[
	   	 Q_n(x_1,x_2,\ldots,x_n) = \sum_{1j_1 + \cdots + nj_n = n} \prod_{i=1}^n \frac{x_i^{j_i}}{i^{j_i} j_i!} 
	   	= \frac{1}{n!} \det\left[ \begin{array}{ccccc}
	   	  x_1 & -1 & 0 & \cdots & 0 \\
	   	 \vphantom{\vdots} x_2 & x_1 & -2 & \cdots &  0 \\
	   	 \vphantom{\vdots} \vdotsB & \vdotsB & \vdotsB & \ddotsB &  \vdotsB  \\
	   	 \vphantom{\vdots} x_{n-1} & x_{n-2} & x_{n-3} & \cdots & -(n-1) \\
	   	 \vphantom{\vdots} x_n & x_{n-1} & x_{n-2} & \cdots & x_1
	   	 \end{array} \right] \,.
   	\]
	 Likewise (cf. \cite[Proof of (2.14$'$)]{MacdonaldBook}), we have the generating series 
   	\[
   	\sum_{n=0}^\infty Q_n(x_1,\ldots,x_n) \frac{t^n}{n!} = \exp\bigg( \sum_{j=1}^\infty x_j \frac{t^j}{j} \bigg)  \,.
   	\]
   	Setting \( x_j = \zeta(\overline{a j}) = - (1 - 2^{1- aj}) \zeta(a j) \) shows directly that
   	\[
   	\sum_{n=0}^\infty Q_n(\zeta(\overline{a}),\,\zeta(\overline{2a}),\,\zeta(\overline{3a}),\,\ldots,\,\zeta(\overline{an})) \frac{t^{an}}{n!} = \frac{\mathfrak{Z}_a(t)}{\mathfrak{Z}_a(t/2)^2} \,
   	\]
   	whence we obtain the following somewhat more explicit formula.	
   	\begin{Prop}
   		Let \( Q_n(x_1,x_2,\ldots,x_n) \) be the polynomial which expresses the $n$-th complete symmetric function \( h_n \) in terms of power sums \cite[\S2]{MacdonaldBook}.  The following evaluation holds, for \( a \geq 3 \), \( b \geq 1 \),
   		\[
   			\zeta^\half(1, \{\{1\}^{a-3},3\}^{b-1}, \{1\}^{a-3}, 2) = -2^{2b-1} Q_b(\zeta(\ol{a}), \zeta(\overline{2a}), \ldots, \zeta(\overline{ab}) ) \,,
   			\]
		where \( \zeta(\overline{k}) = -(1 - 2^{1-k})\zeta(k) \).
   	\end{Prop}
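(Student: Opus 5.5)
The plan is to read this off from part (i) of the preceding Proposition. That part already gives
\[
\zeta^\half(1, \{\{1\}^{a-3},3\}^{b-1}, \{1\}^{a-3}, 2) = 2^{2b-1} \cdot [z^{ab}] \bigg( {-} \frac{\mathfrak{Z}_a(-z)}{\mathfrak{Z}_a(-z/2)^2} \bigg),
\]
so what remains is to identify the ratio \( \mathfrak{Z}_a(-z)/\mathfrak{Z}_a(-z/2)^2 \) as the exponential generating function of the \( Q_n \) evaluated at \( x_j = \zeta(\overline{aj}) \). Then we extract the coefficient of \( z^{ab} \).

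\textbf{Step 1: fix the sign convention.} First I would pin down how \( \mathfrak{Z}_a(-x) \) is to be read. It must match the generating series \( \sum_n \zeta^\star(\{s\}^n) x^{sn} = 1/\mathfrak{Z}_s(-x) \) taken from \cite{HoffmanIhara17}. That identity is the stuffle-antipode relation \( \big(\sum \zeta(\{s\}^n)(-1)^n x^{sn}\big)\big(\sum \zeta^\star(\{s\}^n) x^{sn}\big)=1 \). So "\( \mathfrak{Z}_s(-x) \)" means substituting \( \lambda^s \mapsto -x^s \) in \eqref{eqn:zrep}, i.e.\ each \( \lambda^{sk} \mapsto (-1)^k x^{sk} \), independently of the parity of \( s \).

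\textbf{Step 2: compute the ratio.} With this convention, \eqref{eqn:zrep} gives
\[
\log \frac{\mathfrak{Z}_a(-z)}{\mathfrak{Z}_a(-z/2)^2} = \sum_{k\ge1} \frac{(-1)^{k-1}(-1)^k z^{ak}}{k}\,\zeta(ak)\,\big(1 - 2\cdot 2^{-ak}\big) = \sum_{k\ge1} \frac{z^{ak}}{k}\Big({-}(1-2^{1-ak})\zeta(ak)\Big).
\]
The bracketed quantity is exactly \( \zeta(\overline{ak}) \).

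\textbf{Step 3: extract the coefficient.} The explicit formula \( Q_n = \sum_{\sum i j_i = n} \prod_i x_i^{j_i}/(i^{j_i} j_i!) \) is the standard expansion of \( h_n \) in power sums. Hence \( \exp\big(\sum_j x_j u^j/j\big) = \sum_n Q_n u^n \), where the coefficient of \( u^n \) is \( Q_n \) itself with no extra \( n! \). I will use this normalization consistently, since it is the one in which \( Q_3 = \frac16(x_1^3+3x_1x_2+2x_3) \). Setting \( u = z^a \) and \( x_j = \zeta(\overline{aj}) \) gives
\[
\frac{\mathfrak{Z}_a(-z)}{\mathfrak{Z}_a(-z/2)^2} = \sum_{n\ge0} Q_n\big(\zeta(\overline a),\ldots,\zeta(\overline{an})\big)\, z^{an}.
\]
Taking \( [z^{ab}] \) with the minus sign and the factor \( 2^{2b-1} \) from part (i) yields the claim.

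\textbf{Main obstacle and check.} The only genuine risk is the sign bookkeeping in Step 1. Reading \( \mathfrak{Z}_a(-z) \) literally as \( \lambda \mapsto -z \) would introduce a spurious factor \( (-1)^{b(a+1)} \) for even \( a \). I would confirm the convention against the worked example \( a=6 \), \( b=1 \). There \( Q_1 = \zeta(\overline 6) = -\tfrac{31}{32}\zeta(6) \), so the formula gives \( -2\,Q_1 = \tfrac{31}{16}\zeta(6) \), in agreement with the stated value of \( \zeta^\half(\{1\}^4,2) \).
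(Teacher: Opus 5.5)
Your proposal is correct and follows the paper's argument: substitute \( x_j = \zeta(\overline{aj}) \) into the power-sum generating series \( \exp\big(\sum_j x_j u^j/j\big) = \sum_n Q_n u^n \) with \( u = z^a \), identify the resulting series with the ratio \( \mathfrak{Z}_a(-z)/\mathfrak{Z}_a(-z/2)^2 \) from part (i) of the preceding proposition, and read off the coefficient of \( z^{ab} \). Your bookkeeping is more careful than the paper's intermediate display, which carries a stray \( 1/n! \) and writes \( \mathfrak{Z}_a(t) \) where \( \mathfrak{Z}_a(-t) \) is needed (in your sense \( \lambda^{ak} \mapsto (-1)^k t^{ak} \)), and your check against \( \zeta^\half(\{1\}^4,2) = \frac{31}{16}\zeta(6) \) confirms the final sign.
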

   
    \subsection{Evaluations of \texorpdfstring{\( \zeta^\half(\{1\}^n, 4) \)}{zeta\textasciicircum{}½(\{1\}\textasciicircum{}n, 4)} and \texorpdfstring{\( \zeta^\half(3, \{1\}^{n-1}, 2) \)}{zeta\textasciicircum{}½(3, \{1\}\textasciicircum{}(n-1), 2)}}
        \label{sec:app:z114}

	Specialize \autoref{thm:main:conv} to the case \( \Bbbk = (2,n+2) \), so that \( \bl(\rev(\Bbbk)) = (\{1\}^n, 4) \).  We obtain, after rescaling,
	\begin{equation}\label{eqn:zh1114}
		 \zeta^\half(\{1\}^n, 4) = -\frac{1}{2^{n+1}} \zeta^\star(n+2,2) + \frac{1}{2^{n-1}} t(2) \zeta^\star(n+2) - 8 t(2,n+2) \,.
	\end{equation}
	We recall from \cite[Proposition 5.3]{CharltonKeilthyDouble}, that \( t(\ev, \ev) \) has a depth-preserving Galois descent to classical double zeta values.  In particular, by setting \( \ell = 1 \) in the referenced reduction for \( t(2\ell,2k) \), we find
	\begin{equation}\label{eqn:t2ev}
		t(2,2k) = 
		 \sum_{\substack{i+j = 2k+2 \\ i,j \geq 2}} \frac{i-1}{2^{i+1}} \zeta(i,j) + \frac{1}{2^{2k+2}} \zeta(2,2k)  +  \frac{1}{2^{2k+1}} \zeta(2k) \zeta(2)  - \frac{k(2k+5)}{2^{2k+3}} \zeta(2k + 2) \,.
	\end{equation}
	On the other hand, from the parity theorem for MZV's (cf. Panzer's explicit identity \cite[Equation 3.3]{PanzerParity16}) and M$t$V's (cf. the double  $t$-value reductions in \cite[Theorem 4.1, and 4.2]{XuParametric}) we have
	\begin{align}
	\label{eqn:zpar} \zeta(a,b) & {}= \begin{aligned}[t]
	& -\frac{1}{2} \zeta(a+b) - \frac{(-1)^a}{2} \Bigg\{ \binom{a+b-1}{a-1} + \binom{a+b-1}{b-1}\Bigg\} \zeta(a+b) \\
	& {} + \delta_{\text{$b$ even}} \zeta(a)\zeta(b) +  (-1)^a \sum_{\substack{2s+k = a+b \\ s > 0, k > 1}} \zeta(2s)\zeta(k) \bigg\{ \binom{k-1}{a-1} + \binom{k-1}{b-1} \bigg\} 
	\end{aligned} \\
	\label{eqn:tpar} t(a,b) & {} = \begin{aligned}[t]
	& -\frac{1}{2} (1 - 2^{-a-b}) \zeta(a+b) + \delta_{\text{$b$ even}} (1-2^{-a})(1-2^{-b}) \zeta(a)\zeta(b) \\
	&  +  (-1)^a \sum_{\substack{2s+k = a+b \\ s > 0, k > 1}} (1-2^{-2s}) \zeta(2s) \frac{\zeta(k)}{2^{a+b-2s}} \bigg\{ \binom{k-1}{a-1} + \binom{k-1}{b-1} \bigg\}
	\end{aligned}
	\end{align}
	Using these one can check that \autoref{eqn:t2ev} also holds after replacing \( k \mapsto k/2 \).  In particular, we have the uniform expression
	\[
	 t(2,k) = 
	 \sum_{\substack{i+j = k+2 \\ i,j \geq 2}} \frac{i-1}{2^{i+1}} \zeta(i,j) + \frac{1}{2^{k+2}} \zeta(2,k)  +  \frac{1}{2^{k+1}} \zeta(k) \zeta(2)  - \frac{k(k+5)}{2^{k+4}} \zeta(k + 2) \,, \quad k \geq 2
	\]
	Substituting this into \autoref{eqn:zh1114} gives us an evaluation for \( \zeta^\half(\{1\}^n,4) \) via double zeta values.  When \( n \) is odd we can apply the parity theorem in \eqref{eqn:zpar} to obtain an expression in terms of Riemann zeta values.
	\begin{Prop} 
		\begin{enumerate}
			\item 
			The following evaluation holds for \( n\geq0 \):
	\begin{align*}
		\zeta^\half(\{1\}^n,4) = {} & \!\!\! \sum_{\substack{i+j = n+4 \\ i,j \geq 2}} \frac{1-i}{2^{i-2}} \zeta(i,j) + \frac{(n+2)(n+7)}{2^{n+3}} \zeta(n+4)
	\end{align*}
	\item The following evaluation holds for \( m \geq 0 \):
	\begin{align*}
		\zeta^\half(\{1\}^{2m+1},4) = {} & \!\!\! \sum_{\substack{2i+j = 2m+5 \\ i \geq 1, j \geq 2}} \!\!\! 8 \Big( 1 - \frac{1}{2^{2i-1}} \Big) \zeta(2i) \cdot \frac{1-j}{2^j} \zeta(j) + \Big( 4 - \frac{3}{2^{2m+1}} - \frac{m(2m+9)}{2^{2m+3}} \Big) \zeta(2m+5) \,.
	\end{align*}
\end{enumerate}
	\end{Prop}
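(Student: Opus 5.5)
We prove the two parts in turn. Part (i) is a direct substitution into \eqref{eqn:zh1114}, followed by one stuffle relation that makes all products of zeta values cancel. Part (ii) then applies the parity formula \eqref{eqn:zpar} to each double zeta value in part (i) and collects coefficients; this collection is where the real work lies.

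\emph{Part (i).} Set \(k = n+2 \geq 2\) in the uniform expression for \(t(2,k)\) and multiply by \(8\). Using \(j = n+4-i\), this gives
\[
8t(2,n+2) = \sum_{\substack{i+j=n+4\\ i,j\geq2}} \frac{i-1}{2^{i-2}}\zeta(i,j) + \frac{1}{2^{n+1}}\zeta(2,n+2) + \frac{1}{2^{n}}\zeta(n+2)\zeta(2) - \frac{(n+2)(n+7)}{2^{n+3}}\zeta(n+4) \,.
\]
In \eqref{eqn:zh1114} we also rewrite:
\begin{itemize}
\item \(\zeta^\star(n+2,2) = \zeta(n+2,2) + \zeta(n+4)\);
\item \(\zeta^\star(n+2) = \zeta(n+2)\);
\item \(t(2) = \tfrac34\zeta(2)\).
\end{itemize}
The \(\zeta(2)\zeta(n+2)\) terms then combine to \(\bigl(\tfrac{3}{2^{n+1}} - \tfrac{2}{2^{n+1}}\bigr)\zeta(2)\zeta(n+2) = 2^{-(n+1)}\zeta(2)\zeta(n+2)\).

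Next apply the stuffle relation \(\zeta(2)\zeta(n+2) = \zeta(2,n+2) + \zeta(n+2,2) + \zeta(n+4)\). After this, every term carrying the prefactor \(2^{-(n+1)}\) cancels:
\begin{itemize}
\item the \(\zeta(n+2,2)\) and \(\zeta(n+4)\) terms cancel against those coming from \(\zeta^\star(n+2,2)\);
\item the \(\zeta(2,n+2)\) term cancels against the one coming from \(8t(2,n+2)\).
\end{itemize}
What remains is exactly the claimed formula. As a sanity check, at \(n=0\) it gives \(-\zeta(2,2) + \tfrac{7}{4}\zeta(4) = \zeta(4) = \zeta^\half(4)\), since \(\zeta(2,2) = \tfrac34\zeta(4)\).

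\emph{Part (ii).} Take \(n = 2m+1\), so every \(\zeta(i,j)\) in part (i) has odd weight \(i+j = 2m+5\). Insert the parity formula \eqref{eqn:zpar} for each \(\zeta(i,j)\) and interchange the order of summation. There are three kinds of terms to collect.
\begin{itemize}
\item \emph{The \(\delta_{j\text{ even}}\,\zeta(i)\zeta(j)\) terms.} Here \(i\) is odd and \(j\) even, so these are products \(\zeta(\text{even})\zeta(\text{odd})\), indexed as \(2i'+j'=2m+5\). They are merged with the main sum below.
\item \emph{The terms \(\zeta(2s)\zeta(k)\) with \(2s+k = 2m+5\).} For each fixed \(k\), the coefficient is
\[
\sum_i \frac{1-i}{2^{i-2}}(-1)^i\left[\binom{k-1}{i-1}+\binom{k-1}{j-1}\right] \,,
\]
with \(j = 2m+5-i\). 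We evaluate it in closed form by the binomial theorem. Writing \((i-1)x^{i-1}\) as \(x\tfrac{\mathrm d}{\mathrm dx}x^{i-1}\), the relevant generating function is \(x\frac{\mathrm d}{\mathrm dx}(1-x)^{k-1}\) at \(x = \tfrac12\). The \(\binom{k-1}{j-1}\) part is handled the same way after reindexing \(i \mapsto 2m+5-i\).
\item \emph{The single \(\zeta(2m+5)\) terms.} These come from the first line of \eqref{eqn:zpar}, and their binomial sums are evaluated by the same derivative trick.
\end{itemize}
The ranges \(i,j\geq 2\) exclude the boundary indices \(i=1\) and \(j=1\). These must be subtracted off by hand from the full binomial sums, and we expect that correction to produce the constants \(4 - \tfrac{3}{2^{2m+1}}\).

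The main obstacle is this coefficient bookkeeping. Three things must come out right:
\begin{itemize}
\item the combined coefficient of each \(\zeta(2i)\zeta(j)\) must reduce to \(8(1-2^{1-2i})\tfrac{1-j}{2^j}\), including the contributions from the \(\delta\)-terms;
\item the boundary corrections must be accounted for correctly;
\item all \(\zeta(2m+5)\) contributions must sum to \(4 - \tfrac{3}{2^{2m+1}} - \tfrac{m(2m+9)}{2^{2m+3}}\), which includes the \(\zeta(n+4)\) term already present in part (i).
\end{itemize}
To guard against sign and indexing errors, we would check \(m=0\) numerically before carrying out the general computation.
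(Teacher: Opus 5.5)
Your proposal follows the paper's own route exactly: substitute the uniform expression for $t(2,k)$ with $k=n+2$ into \eqref{eqn:zh1114}, use the stuffle product to cancel all the $2^{-(n+1)}$-terms and obtain (i), and then, for $n=2m+1$, apply the parity formula \eqref{eqn:zpar} to each $\zeta(i,j)$ to obtain (ii). The bookkeeping you defer does close. With $w=2m+5$, once the $\delta$-term is added, the coefficient of $\zeta(2s)\zeta(k)$ is $(k-1)\bigl(2^{4-w}-2^{3-k}\bigr)=8(1-2^{1-2s})\frac{1-k}{2^k}$, and the parity contributions to $\zeta(w)$ total $4-\frac{(m+2)(m+3)}{2^{2m+1}}$, which together with the $\zeta(n+4)$ term from (i) gives the stated constant.
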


	On the other hand, specializing \autoref{thm:main:conv} to the case \( \Bbbk = (n+2,2) \), with \( \bl(\rev(\Bbbk)) = (3,\{1\}^{n-1}, 2) \), gives
\begin{equation}\label{eqn:zh3112}
\zeta^\half(3,\{1\}^{n-1}, 2) = -\frac{1}{2^{n+1}} \zeta^\star(2,n+2) + 2 t(n+2) \zeta^\star(2) - 8 t(n+2,2) \,.
\end{equation}
	Adding \autoref{eqn:zh1114} and \eqref{eqn:zh3112}, we readily find (using the stuffle product, recalling \( t(n) = (1-2^{-n})\zeta(n) \))
	\begin{align*}
		\zeta^\half(\{1\}^{n}, 4) + \zeta^\half(3,\{1\}^{n-1}, 2) = 
		-4 \Big(1 - \frac{1}{2^{n+1}} \Big) \zeta(2) \zeta(n+2) + \Big(8 - \frac{1}{2^n} \Big) \zeta(n+4) \,
	\end{align*}
	whence we immediately obtain similar evaluations for \( \zeta^\half(3,\{1\}^{n-1}, 2) \).	
	\begin{Prop} 
		\begin{enumerate}
			\item 
			The following evaluation holds for \( n\geq1 \),
			\begin{align*}
			\zeta^\half(3,\{1\}^{n-1},2) = {} & \!\!\! \sum_{\substack{i+j = n+4 \\ i,j \geq 2}} \frac{i-1}{2^{i-2}} \zeta(i,j) 
			- \Big( 4 - \frac{1}{2^{n-1}} \Big) \zeta(2) \zeta(n+2) + \Big( 8 - \frac{1}{2^n} - \frac{(n+2)(n+7)}{2^{n+3}} \Big) \zeta(n+4) \,.
			\end{align*}
			\item The following evaluation holds for \( m \geq 0 \),
			\begin{align*}
			\zeta^\half(3,\{1\}^{2m},2) = {} \!\!\! \sum_{\substack{2i+j = 2m+5 \\ i \geq 1, j \geq 2}} \!\!\! 8 \Big( 1 - \frac{1}{2^{2i-1}} \Big) \zeta(2i) \cdot \frac{j-1}{2^j} \zeta(j) - \Big( 4 - \frac{1}{2^{2m}} \Big) \zeta(2)\zeta(2m+3) & \\[-1em]
			{} + \Big( 4 + \frac{1}{2^{2m}} + \frac{m(2m+9)}{2^{2m+3}} \Big) \zeta(2m+5) & \,.
			\end{align*}
		\end{enumerate}
	\end{Prop}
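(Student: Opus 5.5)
The plan is to deduce this Proposition from the identity for \( \zeta^\half(\{1\}^{n}, 4) + \zeta^\half(3,\{1\}^{n-1}, 2) \) stated just above it, together with the preceding Proposition giving \( \zeta^\half(\{1\}^n,4) \). Concretely, \( \zeta^\half(3,\{1\}^{n-1},2) \) equals that sum minus \( \zeta^\half(\{1\}^n,4) \).

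The first step is to justify the sum identity carefully. Add \autoref{eqn:zh1114} and \eqref{eqn:zh3112}, then apply three inputs:
\begin{itemize}
\item the stuffle relation \( t(2,n+2)+t(n+2,2) = t(2)t(n+2) - t(n+4) \);
\item the zeta-star stuffle \( \zeta^\star(n+2,2)+\zeta^\star(2,n+2) = \zeta(2)\zeta(n+2)+\zeta(n+4) \);
\item the single values \( t(2)=\tfrac34\zeta(2) \) and \( t(k)=(1-2^{-k})\zeta(k) \).
\end{itemize}
Collecting terms, the coefficient of \( \zeta(2)\zeta(n+2) \) is
\[
-2^{-n-1}+\tfrac32 2^{-n}+2-2^{-n-1}-6(1-2^{-n-2}) = -4+2^{1-n} = -4(1-2^{-n-1}) \,.
\]
The coefficient of \( \zeta(n+4) \) is \( -2^{-n-1}+8(1-2^{-n-4}) = 8-2^{-n} \). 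This confirms the displayed identity for the sum.

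For part (i), subtract the evaluation of \( \zeta^\half(\{1\}^n,4) \) from part (i) of the previous Proposition. This negates the double-zeta sum, giving \( \sum \frac{i-1}{2^{i-2}}\zeta(i,j) \). It also contributes \( -\frac{(n+2)(n+7)}{2^{n+3}}\zeta(n+4) \) to the coefficient \( 8-2^{-n} \). The \( \zeta(2)\zeta(n+2) \) term \( -(4-2^{1-n}) \) carries over unchanged, which gives exactly the stated formula.

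For part (ii), set \( n=2m+1 \) and instead subtract part (ii) of the previous Proposition. This negates the product sum, turning \( \frac{1-j}{2^j} \) into \( \frac{j-1}{2^j} \). The term \( \zeta(2)\zeta(n+2) \) becomes \( -(4-2^{-2m})\zeta(2)\zeta(2m+3) \). The single-zeta coefficient becomes
\[
8-2^{-2m-1}-\Big(4-\tfrac{3}{2^{2m+1}}-\tfrac{m(2m+9)}{2^{2m+3}}\Big) = 4+2^{-2m}+\tfrac{m(2m+9)}{2^{2m+3}} \,,
\]
as claimed.

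There is no genuine obstacle here. The only step needing care is the coefficient bookkeeping in the stuffle simplification of the sum, in particular the factors \( 2^{-n-1} \) versus \( 2^{1-n} \) coming from rescaling \( \ztw^\half \) and \( \ttw \). I would double-check those factors numerically for small \( n \), for example against the MZV Data Mine.
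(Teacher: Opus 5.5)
Your proposal is correct and follows the paper's route exactly. You add \eqref{eqn:zh1114} and \eqref{eqn:zh3112}, use the stuffle relations for \( t \) and \( \zeta^\star \) to obtain the sum identity, and then subtract the preceding Proposition's evaluation of \( \zeta^\half(\{1\}^n,4) \), using part (i) for general \( n \) and part (ii) with \( n=2m+1 \); I checked your coefficients and they are right.
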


	\subsection{Evaluation of \texorpdfstring{\( \zeta^\half(\{1\}^i, 3, \{1\}^j, 2) \)}{zeta\textasciicircum{}½(\{1\}\textasciicircum{}i, 3, \{1\}\textasciicircum{}j, 2)}, \texorpdfstring{\( i + j \)}{i+j} even}
	    \label{sec:app:z113112}

	Specialize \autoref{thm:main:conv} to the case \( \Bbbk = (j+3,i+2) \), so that \( \bl(\rev(\Bbbk)) = (\{1\}^{i}, 3, \{1\}^j, 2) \).  We obtain, after rescaling,
	\[
	\zeta^\half(\{1\}^i, 3, \{1\}^j, 2) = -\frac{1}{2^{i+j+2}}  \zeta^\star(i+2,j+3) + \frac{1}{2^{i-1}} \zeta(i+2) t(j+3) - 8 t(j+3, i+2) \,.
	\]
	Under the assumption that \( i + j \) is even, the parity theorem for MZV's and M$t$V's (see \autoref{eqn:zpar} and \eqref{eqn:tpar} above) reduces this to a polynomial in Riemann zeta values.  We find the following.
	\begin{Prop}
		The following evaluation holds for \( i, j \geq 0 \), with \( i + j \) even,
	\begin{align*}
		\zeta^\half(\{1\}^i, 3, \{1\}^j, 2) = 
		& \sum_{\substack{2s + k = i+j+5 \\ s \geq 1, k \geq 2}} \frac{(-1)^j (2^{2s}-1) - (-1)^i}{2^{i+j+2}} \bigg\{ \binom{k-1}{i+1} + \binom{k-1}{j+2} \bigg\} \zeta(2s) \zeta(k)  \\*[-0.5ex]
		& + \Big( \frac{2^{j+3} - 2}{2^{i+j+2}} - \delta_{\text{$i\!$ even}} (8 - 2^{1-i} - 2^{-j}) \Big)\zeta(i+2) \zeta(j+3) \\*[0.5ex]
		 & + \frac{1}{2^{i+j+3}} \bigg( 2^{i+j+5} - 2 + (-1)^i \bigg\{ \binom{i+j+4}{i+1} + \binom{i+j+4}{j+2} \bigg\} \bigg) \zeta(i+j+5) \,.
	\end{align*}
	\end{Prop}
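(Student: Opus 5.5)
The plan is to start from the specialization of \autoref{thm:main:conv} to \( \Bbbk = (j+3, i+2) \) displayed just above the proposition:
\[
\zeta^\half(\{1\}^i, 3, \{1\}^j, 2) = -\frac{1}{2^{i+j+2}} \zeta^\star(i+2,j+3) + \frac{1}{2^{i-1}} \zeta(i+2)\, t(j+3) - 8\, t(j+3, i+2) \,.
\]
I would reduce every term to Riemann zeta values using the parity identities \eqref{eqn:zpar} and \eqref{eqn:tpar}. These apply because the weight \( i+j+5 \) is odd exactly when \( i+j \) is even.

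First I rewrite the non-double terms. The star value splits as \( \zeta^\star(i+2,j+3) = \zeta(i+2,j+3) + \zeta(i+j+5) \). The depth-one \( t \)-value is \( t(j+3) = (1-2^{-j-3})\zeta(j+3) \). The middle term therefore contributes \( (2^{1-i} - 2^{-i-j-2})\,\zeta(i+2)\zeta(j+3) \).

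Next I apply \eqref{eqn:zpar} to \( \zeta(a,b) \) with \( a = i+2 \), \( b = j+3 \):
\begin{itemize}
\item the sign is \( (-1)^a = (-1)^i \);
\item the product term carries \( \delta_{j+3\text{ even}} = \delta_{i\text{ odd}} \);
\item the binomial brace is \( \binom{k-1}{i+1} + \binom{k-1}{j+2} \).
\end{itemize}
Then I apply \eqref{eqn:tpar} to \( t(a,b) \) with \( a = j+3 \), \( b = i+2 \):
\begin{itemize}
\item the sign is \( (-1)^{j+3} = -(-1)^j \);
\item the product term carries \( \delta_{i\text{ even}} \) with coefficient \( (1-2^{-j-3})(1-2^{-i-2}) \);
\item the binomial brace is the same \( \binom{k-1}{j+2} + \binom{k-1}{i+1} \);
\item each summand carries the weight factor \( (1-2^{-2s})\, 2^{-(i+j+5-2s)} \).
\end{itemize}

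Then I collect terms in three families. The first family is the products \( \zeta(2s)\zeta(k) \) with \( 2s + k = i+j+5 \). There, \( -2^{-i-j-2}(-1)^i \) from the zeta part combines with \( +8(-1)^j(1-2^{-2s})2^{2s-i-j-5} = (-1)^j(2^{2s}-1)/2^{i+j+2} \) from the \( t \)-part. Together these give the stated coefficient \( \big((-1)^j(2^{2s}-1) - (-1)^i\big)/2^{i+j+2} \).

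The second family is the explicit product \( \zeta(i+2)\zeta(j+3) \), treated separately from the sum. It collects three contributions:
\begin{itemize}
\item the middle term of the convolution formula;
\item \( -2^{-i-j-2}\,\delta_{i\text{ odd}} \) from \( \zeta(i+2,j+3) \);
\item \( -8\,\delta_{i\text{ even}}(1-2^{-j-3})(1-2^{-i-2}) \) from \( t(j+3,i+2) \).
\end{itemize}
Writing \( \delta_{i\text{ odd}} = 1 - \delta_{i\text{ even}} \) and simplifying should give \( \frac{2^{j+3}-2}{2^{i+j+2}} - \delta_{i\text{ even}}(8 - 2^{1-i} - 2^{-j}) \).

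The third family is the single value \( \zeta(i+j+5) \). It collects three contributions:
\begin{itemize}
\item \( -2^{-i-j-2} \) from the star splitting;
\item \( -2^{-i-j-2} \) times the constant part of \eqref{eqn:zpar}, namely \( -\tfrac12 - \tfrac{(-1)^i}{2}\{\binom{i+j+4}{i+1} + \binom{i+j+4}{j+2}\} \);
\item \( +4(1 - 2^{-i-j-5}) \) from \( -8 \) times the constant part of \eqref{eqn:tpar}.
\end{itemize}
These should combine to the stated bracket over \( 2^{i+j+3} \).

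The main obstacle is purely bookkeeping: tracking the powers of \( 2 \) and the parity deltas without sign errors. I would check the final formula numerically for small \( (i,j) \), such as \( (0,0) \), \( (1,1) \) and \( (2,0) \), against the MZV Data Mine as a safeguard.
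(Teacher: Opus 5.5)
Your proposal is correct and follows the paper's route exactly: you specialize the convolution formula to \( \Bbbk=(j+3,i+2) \), then reduce \( \zeta^\star(i+2,j+3) \), \( t(j+3) \) and \( t(j+3,i+2) \) to Riemann zeta values via the parity identities \eqref{eqn:zpar} and \eqref{eqn:tpar}, using that the weight \( i+j+5 \) is odd. Your bookkeeping is also right, including \( \delta_{j+3\text{ even}}=\delta_{i\text{ odd}} \) and the three coefficient families, and it reproduces the stated coefficients.
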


	One can also find a formula for \( \zeta^\half(\{1\}^\ev, 3, \{1\}^\od, 2) \) in terms of double zeta values, by using the depth preserving Galois descent of \( t(\ev, \ev) \) from \cite[Proposition 5.3]{CharltonKeilthyDouble}; we leave this to the interested reader.  However, since \( t(\od,\od) \) can only be expressed via MZV's of depth \( \geq 4 \) (cf. \cite[Remark 5.2]{CharltonKeilthyDouble}), a formula for \( \zeta^\half(\{1\}^\od, 3, \{1\}^\ev, 2) \) is beyond the scope of the current work. 

	\subsection{A hypergeometric identity}
	    \label{sec:app:hyp}
	
	Specialize \autoref{thm:main:conv} to the case \( \Bbbk = (\{3\}^n,2) \), so that \( \bl(\rev(\Bbbk)) = (\{3\}^n,2) \) also (this seems to be the only such instance).  We obtain, after rescaling,
	\[
		-2^{n+1} \zeta^\half(\{3\}^n, 2) = \sum_{i=0}^n (-1)^i  2^{3i}  t(\{3\}^i) \cdot\zeta^\star(2, \{3\}^{n-i}) - (-1)^{n+1} 2^{3n+2} t(\{3\}^n, 2) \,.
	\]
	Form the generating series \( \sum_{n=0}^\infty \bullet \, x^{3n+2} \), and we obtain
	\[
		\sum_{n=0}^\infty -2^{\sfrac{1\!}{3}} \zeta^\half(\{3\}^n, 2) (2^{\sfrac{1\!}{3}} x)^{3n+2}
			=
		\sum_{n=0}^\infty t(\{3\}^n) (-2x)^{3n}  \cdot \sum_{n=0}^\infty \zeta^\star(2,\{3\}^n) x^{3n+2}
			-
		\sum_{n=0}^\infty t(\{3\}^n, 2) (-2x)^{3n+2} \,.
	\]
	
	Write \( \rho = \exp(2 \ii \pi / 3) \).  Then one can directly express the above generating series via hypergeometric functions and gamma functions.  Write the following generating series equalities using the definition of (interpolated) MZV's and M$t$V's (see \cite[Eqn. (11), (44)]{zagier2232} for MZV's and zeta star values; cf. \cite[\S2.1]{AuCtel} for the general interpolated MZV's).  After this, the evaluation via hypergeometric functions follows purely as a term-wise equality with the relevant Pochhammer symbol expression.
	
	For the \( \zeta^\half \) generating series, we have
	\begin{align*}
		 \sum_{n=0}^\infty -2^{\sfrac{1\!}{3}} \zeta^\half(\{3\}^n, 2) (2^{\sfrac{1\!}{3}} x)^{3n+2} 
		& =  -2^{\sfrac{1\!}{3}} \sum_{\ell=1}^\infty \frac{1}{\ell^2} \frac{1}{1 - \frac{1}{2}  v} \prod_{0 < n < \ell} \bigg( 1 + \frac{u}{1-\frac{1}{2} u} \bigg) \,, \quad v = \frac{(2^{\sfrac{1\!}{3}})x^3}{\ell^3} , u = \frac{2^{\sfrac{1\!}{3}} x}{n^3}  \\[1ex]
		& =  \frac{2x^2}{x^3 - 1} \pFq{4}{3}{2, 1{+}x, 1{+}\rho x, 1{+}\rho^2 x}{2{-}x, 2{-}\rho x, 2{-}\rho^2 x}{1} \,.
	\end{align*}
	For the \( t(\{3\}^n) \) generating series, we have by \autoref{eqn:zrepgs},
	\begin{align*}
		\sum_{n=0}^\infty t(\{3\}^n) (-2x)^{3n}
	    &= \frac{\mathfrak{Z}_3(-2 x)}{\mathfrak{Z}_3(-x)} = \exp\bigg( \sum_{k=1}^\infty \frac{-(2^{3k} - 1)}{k}  \zeta(3k) x^{3k} \bigg) \\
		&= \frac{\Gamma(1-x)\Gamma(1-\rho x)\Gamma(1-\rho^2 x)}{\Gamma(1 - 2 x)\Gamma(1 - 2 \rho x)\Gamma(1 - 2 \rho^2 x)} \\
		&= \frac{\pi^{\sfrac{3\!}{2}}}{\Gamma(\frac{1}{2} -  x)\Gamma(\frac{1}{2} - \rho x)\Gamma(\frac{1}{2} - \rho^2 x)} \,.
	\end{align*}
	Here the final equality follows using \( \frac{\Gamma(1-z)}{\Gamma(1-2z)} = \frac{\sqrt{\pi} 2^{2z}}{\Gamma(\frac{1}{2} - z)} \), a variant of the duplication relation for the gamma function, and the relation \( 1 + \rho + \rho^2 = 0 \).  For the \( \zeta^\star \) generating series, we have
	\begin{align*}
		\sum_{n=0}^\infty \zeta^\star(2,\{3\}^n) x^{3n+2} 
		& = x^2 \sum_{\ell=1}^\infty \frac{1}{\ell^2} \prod_{n \geq \ell}\bigg( 1 - \frac{x^3}{n^3} \bigg)^{-1} \\
		& = \Gamma(1-x)\Gamma(1-\rho x)\Gamma(1-\rho^2 x) \cdot x^2 \pFq{3}{2}{1{-}x, 1{-}\rho x, 1{-}\rho x^2}{2,2}{1} \,.
	\end{align*}
	Finally, for the \( t(\{3\}^n, 2) \) generating series, we have
	\begin{align*}
		\sum_{n=0}^\infty t(\{3\}^n, 2) (-2x)^{3n+2} 
		&= (-2x)^2 \sum_{\ell=1}^\infty \prod_{0 < n < \ell} \bigg( 1 + \frac{(-2x)^3}{(2n-1)^3} \bigg) \frac{1}{(2\ell-1)^2}  \\
		& = 4x^2 \pFq{4}{3}{1, \frac{1}{2} {-} x, \frac{1}{2} {-} \rho x, \frac{1}{2} {-} \rho^2 x}{\frac{1}{2}, \frac{3}{2}, \frac{3}{2}}{1} \,.
	\end{align*}
	
	Putting these expressions together, we obtain the following apparently new \( {}_4F_3 \) identity.
	\begin{Prop}[${}_4F_3$ hypergeometric identity]
		The following identity holds
		\begin{multline*}
			 \frac{1}{x^3 - 1} \cdot \pFq{4}{3}{2, 1{+}x, 1{+}\rho x, 1{+}\rho^2 x}{2 {-} x, 2 {-} \rho x, 2 {-} \rho^2 x}{1} 
			 \,\, = \,\,
			  - 2 \cdot \pFq{4}{3}{1, \frac{1}{2} {-} x, \frac{1}{2} {-} \rho x, \frac{1}{2} {-} \rho^2 x}{\frac{1}{2}, \frac{3}{2}, \frac{3}{2}}{1} \\[1ex]
			 {} + \frac{\pi^{\sfrac{3\!}{2}}}{2}  \frac{\Gamma(1-x)\Gamma(1-\rho x)\Gamma(1-\rho^2 x)}{\Gamma(\frac{1}{2} -  x)\Gamma(\frac{1}{2} - \rho x)\Gamma(\frac{1}{2} - \rho^2 x)} 
			 \cdot \pFq{3}{2}{1-x, 1-\rho x, 1-\rho^2 x}{2,2}{1} \,.
		\end{multline*}
	\end{Prop}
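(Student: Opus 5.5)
The plan is to assemble the identity from the four generating series computed just above, all specialised from \autoref{thm:main:conv} with \( \Bbbk = (\{3\}^n,2) \). The first step is to rewrite the rescaled convolution identity in its generating-series form. Multiply the $n$-th instance by \( x^{3n+2} \) and sum over \( n \geq 0 \). The product \( \sum_i (-1)^i 2^{3i} t(\{3\}^i) \zeta^\star(2,\{3\}^{n-i}) \) then becomes the Cauchy product of \( \sum t(\{3\}^n)(-2x)^{3n} \) with \( \sum \zeta^\star(2,\{3\}^n)x^{3n+2} \). The boundary term becomes \( -\sum t(\{3\}^n,2)(-2x)^{3n+2} \). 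The bookkeeping here is that \( (-1)^{n+1}2^{3n+2}x^{3n+2} = -(-2x)^{3n+2} \) and \( 2^{n+1} x^{3n+2} = 2^{1/3}(2^{1/3}x)^{3n+2} \). I would recheck these powers of \(2\) and the signs first, since the displayed rescaled identity must be consistent with \( \ttw = 2^{\wwt} t \) and \( \ztw^\half = 2^{\ddp}\zeta^\half \), where \( \wwt(\{3\}^i)=3i \) and \( \ddp(\{3\}^n,2)=n+1 \).

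The second step is to substitute each series by its closed form, justifying the hypergeometric expressions termwise.

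For the \( \zeta^\half \) series, use
\[
\sum_{n} \zeta^\half(\{3\}^n,2) y^{3n} = \sum_{\ell}\frac{1}{\ell^2}\frac{1}{1-\frac{1}{2}y^3/\ell^3}\prod_{0<m<\ell}\frac{1+\frac{1}{2}y^3/m^3}{1-\frac{1}{2}y^3/m^3} \,.
\]
This follows from \eqref{eqn:zint}: each weakly increasing chain gets weight \( \tfrac12 \) per equality, and resumming the geometric series in each block gives the stated factors. Then set \( y=2^{1/3}x \), so that \( \tfrac12 y^3 = x^3 \), and factor \( \ell^3 \pm x^3 \) over \( 1,\rho,\rho^2 \). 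Each ratio of products becomes a ratio of Pochhammer symbols: \( \prod_{m<\ell}(m^3+x^3)/(m^3-x^3) \) equals \( (1+x)_{\ell-1}(1+\rho x)_{\ell-1}(1+\rho^2 x)_{\ell-1}\big/(1-x)_{\ell-1}\cdots \). After shifting \( \ell=k+1 \), this yields the \( {}_4F_3 \) with prefactor \( 2x^2/(x^3-1) \) up to sign.

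For the \( t(\{3\}^n) \) series, use \eqref{eqn:zrepgs}, the Weierstrass product \( \Gamma(1-z)^{-1} = e^{\gamma z}\prod(1-z/n)e^{z/n} \) (the exponential factors cancel because \( 1+\rho+\rho^2=0 \)), and the duplication formula.

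For the \( \zeta^\star \) series, write \( \prod_{n\ge \ell}(1-x^3/n^3)^{-1} = \prod_{n\geq1}(\cdots)^{-1}\cdot\prod_{n<\ell}(1-x^3/n^3) \). The first factor is \( \Gamma(1-x)\Gamma(1-\rho x)\Gamma(1-\rho^2x) \) and the second is a Pochhammer ratio giving \( {}_3F_2(\ldots;2,2;1) \).

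For the \( t(\{3\}^n,2) \) series, the product \( \prod_{n<\ell}\bigl((2n-1)^3-8x^3\bigr)/(2n-1)^3 \) is \( \prod (\tfrac12-\rho^j x)_{\ell-1}/(\tfrac12)_{\ell-1}^3 \). Together with \( 1/(2\ell-1)^2 \), expressed via \( (\tfrac12)_{k}/(\tfrac32)_{k} \), this gives the stated \( {}_4F_3 \).

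The final step is to equate the two sides and divide by \( 2x^2 \). The \( t(\{3\}^n) \) series is \( \pi^{3/2}/\prod\Gamma(\tfrac12-\rho^jx) \), and multiplying it by the \( \Gamma(1-\rho^j x) \) factor from the \( \zeta^\star \) series produces the gamma quotient in the statement. The \( -4x^2\,{}_4F_3 \) term divided by \( 2x^2 \) gives the \( -2\,{}_4F_3 \). The prefactor of the \( \pi^{3/2} \) term then comes out as \( \tfrac12 \), as stated.

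I expect the main obstacle to be the constant bookkeeping rather than anything conceptual. Specifically, this means the stray \( 2^{1/3} \), the signs in \( (-2x)^{3n+2} \), and the sign of \( 1/(x^3-1) \) coming from \( 1/(1-x^3/\ell^3) \) at \( \ell=1 \) after index shifting. Convergence of all series at argument \( 1 \) for small \( |x| \) must also be checked. For the \( {}_4F_3 \) terms, the parameter excess \( s=\sum b-\sum a \) is \( 3>0 \) and \( 1 \) respectively, and for the \( {}_3F_2 \) it is \( 1 \). Since all series are analytic near \( x=0 \) and agree coefficientwise, the identity then holds as stated.
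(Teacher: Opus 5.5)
Your proposal is correct and follows the paper's route exactly. You specialize \autoref{thm:main:conv} to $\Bbbk=(\{3\}^n,2)$ (with $\bl(2,\{3\}^n)=(\{3\}^n,2)$), form the generating series in $x^{3n+2}$, evaluate the four series termwise via Pochhammer symbols, the Weierstrass product and the duplication formula, and divide by $2x^2$; your boundary sign $-\sum t(\{3\}^n,2)(-2x)^{3n+2}$ is the one that follows from the theorem and agrees with the paper's generating-series display. One harmless slip: the parameter excess of the first ${}_4F_3$ is $6-5=1$, not $3$ (since $1+\rho+\rho^2=0$), which is still positive, so convergence at argument $1$ is unaffected.
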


	\section{Recap of iterated integrals and the theory of iterated beta integrals}\label{sec:beta}
	
	In this section, we recall the setup and properties of general iterated integrals.  We then recall the finite and infinite iterated beta integrals and their key properties, including translation invariance. \medskip
	
	\subsection{General iterated integrals}  The theory of general iterated integrals is established by Chen \cite{ChenIteratedIntegrals}; we recall the discussion given in \cite[\S2]{BrownDecomposition12}.  Let \( M \) be a smooth manifold, and \( \gamma \colon [0,1] \to M \) a piecewise smooth path on \( M \).  A family \( \omega_1,\ldots,\omega_n \) of differential 1-forms on \( M \), can be pulled back under \( \gamma \) to the interval \( [0,1] \) giving
	\[
		\gamma^\ast(\omega_i) \coloneqq f_i(t) \mathrm{d}t \,.
	\]
	The iterated integral of \( \omega_1, \ldots, \omega_n \) along \( \gamma \) is defined by
	\[
		\int_\gamma \omega_1 \cdots \omega_n = \int_{0 < t_1 < \cdots < t_n < 1} f_1(t_1) \dd t_1 \cdots f_n(t_n) \dd t_n \,.
	\]
	When \( n = 0 \), the empty integral is defined to be 1 by convention.  The iterated integrals satisfy various properties.

	\subsubsection*{Shuffle-product} Given \( r+s \) differential 1-forms \( \omega_1,\ldots, \omega_{r+s} \), then
		\[
			\int_\gamma \omega_1 \cdots \omega_r \int_\gamma \omega_{r+1} \cdots \omega_{r+s} = \sum_{\sigma \in \Sigma(r,s)} \int_\gamma \omega_{\sigma(1)} \cdots \omega_{\sigma(r+s)} \,,
		\]
		where \( \Sigma(r,s) \coloneqq \{ \sigma \in \mathfrak{S}_n \mid \sigma(1) < \cdots < \sigma(r) \,, \sigma(r+1) < \cdots < \sigma(r+s) \} \) is the set of \( (r,s) \)-shuffles.

	\subsubsection*{Composition of paths} Let \( \alpha, \beta \colon [0,1] \to M \) be two paths, such that \( \alpha(1) = \beta(0) \), and write \( \alpha\beta \) be the composed path obtained by traversing first \( \alpha \) then \( \beta \).  Then
		\[
			\int_{\alpha\beta} \omega_1 \ldots \omega_n = \sum_{i=0}^n \int_\alpha \omega_1 \cdots \omega_i \int_\beta \omega_{i+1} \cdots \omega_n \,.
		\]

	\subsubsection*{Reversal of paths} If \( \gamma^{-1}(t) = \gamma(1-t) \) is the reversal of the path \( \gamma \), then
		\[
		\int_{\gamma^{-1}} \omega_1 \cdots \omega_n = (-1)^n \int_\gamma \omega_n \cdots \omega_1 \,.
		\]

	\subsubsection*{Functoriality} For \( f \colon M' \to M \) smooth, and \( \gamma : [0,1] \to M' \), then
		\[
			\int_\gamma f^\ast \omega_1 \cdots f^\ast \omega_n = \int_{f(\gamma)} \omega_1 \cdots \omega_n \,.
		\]

	For a path \( \gamma \) from \( a \) to \( b \), it is convenient to introduce the notation
	\[
		I_\gamma(a; \omega_1 \cdots \omega_n; b) \coloneqq \int_\gamma \omega_1 \cdots \omega_n \,.
	\]
	One can allow arbitrary linear combinations of words in the differential forms \( \omega_i \) as the argument of \( I_\gamma(a;\,\bullet\,; b) \). \medskip
	
	When \( M = \mathbb{C} \setminus S \), for some finite set of points \( S \), one has the family of closed 1-forms
	\[
		e_{x_i}(t) \coloneqq \frac{\mathrm{d}t}{t-x_i} \in \Omega^1(M) \,, \text{ for } x_i \in S \,.
	\]
	Taking a path \( \gamma \colon [0,1] \to M \), with end points \( \gamma(0) = x_0, \gamma(1) = x_{n+1} \in M \), we have
	\[
		I_\gamma(x_0; x_1,\ldots,x_n; x_{n+1}) \coloneqq I_\gamma(x_0; e_{x_1} \cdots e_{x_n}; x_{n+1}) = \int_{\gamma} \frac{\dd t}{t-x_1} \cdots \frac{\dd t}{t - x_n} \,,
	\]
	extending the notation from \eqref{eqn:intro:itint}.
	
	One can also allow \( x_0, x_{n+1} \in S \), with a suitable logarithmic regularization involving tangential basepoints, even when the integral does not converge directly (cf. \cite[\S5.16]{DG05}, \cite[\S3.8]{mzvBook}).  When the path \( \gamma \) is clear from context (such as the straight line path \( 0 \to 1 \)), we may omit gamma from the notation.
	
	\subsection{Integral representations of nested sums} We recall now how multiple zeta values, and their relevant variants, can be expressed as iterated integrals.  The following iterated integral expression for the multiple polylogarithm is obtained through term-by-term integration \cite[Theorem 2.2]{GoncharovMultiple01}, after expanding each \( \frac{1}{t-y_i} \) as a geometric series:
	\[
		\Li_{k_1,\ldots,k_d}(x_1,\ldots,x_d) = (-1)^d I(0; e_{y_1} \, e_0^{k_1-1} \, e_{y_2} \, e_0^{k_2-1}  \,\cdots \, e_{y_d}  \,e_0^{y_d-1}; 1) \,, \quad y_i = 1 / \textstyle\prod_{j=i}^d x_i \,.
	\]
	Here \( e_0^n = e_0 \cdots e_0 \) denotes the word consisting of the form \( e_0 \) repeated \( n \) times.
	
	\subsubsection*{Multiple zeta values} In particular, the multiple zeta value \( \zeta(k_1,\ldots,k_d) \) has the following integral representation, by setting \( x_i = 1 \) in the multiple polylogarithms:
	\[
		\zeta(k_1,\ldots,k_d) = (-1)^d I(0; e_1 \, e_0^{k_1-1}  \,e_1  \,e_0^{k_2-1}  \,\cdots \, e_1  \,e_0^{k_d - 1}; 1) \,.
	\]
	
	\subsubsection*{Multiple \texorpdfstring{$t$}{t}-values} Using the expression of multiple $t$-values via alternating multiple zeta values (see \eqref{eqn:ttoz}), one obtains
	\begin{align*}
		t(k_1,\ldots,k_d) &= \sum_{\epsilon_i \in \{ \pm 1 \}} \frac{\epsilon_1\cdots \epsilon_d}{2^d} I(0; e_{\epsilon_1 \cdots \epsilon_d} \, e_0^{k_1 - 1}  \, e_{\epsilon_2 \cdots \epsilon_d}  \, e_0^{k_2-1}  \, \cdots e_{\epsilon_d}  \, e_0^{k_d-1}; 1) \\
		& = \sum_{\eta_i \in \{ \pm 1 \}} \frac{\eta_1}{2^d} I(0; e_{\eta_1} \, e_0^{k_1 - 1}  \, e_{\eta_2}  \, e_0^{k_2-1}  \, \cdots e_{\eta_d}  \, e_0^{k_d-1}; 1) \,.
	\end{align*}
	Notice that
	\begin{align*}
		e_{1}(t) + e_{-1}(t) = \frac{\dd t}{t - 1} + \frac{\dd t}{t+1} = \frac{2 t \dd t}{t^2 - 1} \,, \qquad 
		e_{1}(t) - e_{-1}(t) = \frac{\dd t}{t - 1} - \frac{\dd t}{t+1} = \frac{2 \dd t}{t^2 - 1} \,.
	\end{align*}
	So each sum over \( \eta_i = \pm 1\) can be simplified, giving
	\[
		t(k_1, \ldots, k_d) = (-1)^d I(0; \tfrac{\dd t}{t^2 - 1}  \, e_0^{k_1-1} \tfrac{t \dd t}{t^2 - 1}  \, e_0^{k_2-1} \, \cdots  \,\tfrac{t \dd t}{t^2 - 1} \, e_0^{k_d-1}; 1) \,,
	\]
	where the first differential form does not have \( t \) in the numerator.  It will be convenient to substitute \( t = \sqrt{s} \); we see readily
	\begin{align*}
		\frac{t \dd t}{t^2 - 1} = \frac{\sqrt{s} \, \dd \sqrt{s}}{s - 1} = \frac{\dd s}{2(s-1)} = \frac{1}{2} e_1(s) \,, \qquad 
		\frac{\dd t}{t} = \frac{\dd \sqrt{s}}{\sqrt{s}} = \frac{1}{2s} = \frac{1}{2} e_0(s) \,.
	\end{align*}
	On the other hand
	\[
		\frac{\dd t}{t^2 - 1} = \frac{\dd \sqrt{s}}{s - 1} = \frac{\dd s}{2\sqrt{s} (s-1)} = \frac{1}{2} e_1(s) - \smash{\underbrace{\frac{\dd s}{2\sqrt{s}(\sqrt{s}+1)}}_{\mathclap{\text{holomorphic at $s=1$}}}} \,.
	\]
	After multiplying both sides by \( 2^{\wwt(\Bbbk)} \), this gives the following integral representation for the rescaled multiple $t$-values:
	\begin{equation}\label{eqn:int:ttw}
	\begin{aligned}
		\ttw(k_1,\ldots,k_d) & = 2^{\wwt(\Bbbk)} t(k_1,\ldots,k_d) \\
		& = (-1)^d  I(0; ( e_1 {-} \tfrac{\dd s}{\sqrt{s}(\sqrt{s}+1)} )  \, e_0^{k_1-1}  \, e_1  \, e_0^{k_2-1}  \, \cdots  \, e_1  \, e_0^{k_d-1}; 1) \,.
		\end{aligned}
	\end{equation}
	
	\subsubsection*{Interpolated multiple zeta values} From the definition \eqref{eqn:zint}, we have
	\begin{align*}
		\zeta^r(k_1,\ldots,k_d) = \sum_{\circ_i = \text{``$+$'' or ``$,$''}} r^{\#\{i \mid \circ_i = {+}\}} \zeta(k_1 \circ_1 k_2 \circ_2 \cdots \circ_{d-1} k_d) .
	\end{align*}
	The iterated integral representations of \( \zeta(k_1,\ldots,k_i,k_{i+1},\ldots,k_d) \) and \( r \zeta(k_1,\ldots,k_i + k_{i+1}, \ldots, k_d) \) are related as follows: the form \( e_1 \) at the start of \( e_1 e_0^{k_{i+1}-1} \) becomes \( (-r e_0) \), with an extra sign from the reduction in depth. Combining all such cases, for \( e_1 e_0^{k_i} \), \( 2 \leq i \leq d \), gives
	\begin{equation}\label{eqn:zint:int}
		\zeta^r(k_1,\ldots,k_d) = (-1)^d I(0; e_1 \, e_0^{k_1-1} \,(e_1 - r e_0)\, e_0^{k_2-1} \,\cdots\, (e_1 - r e_0)\, e_0^{k_d - 1} ; 1) \,.
	\end{equation}
	In particular, for \( r = 1 \), we obtain an integral representation for the multiple zeta-star values,
	\begin{equation}\label{eqn:zstar:int}
		\zeta^\star(k_1,\ldots,k_d) = (-1)^d I(0; e_1 \,e_0^{k_1-1}\, (e_1 - e_0)\, e_0^{k_2-1}\, \cdots\, (e_1 - e_0) \,e_0^{k_d - 1} ; 1) \,.
	\end{equation}
	Likewise, for \( r = \half \),  after multiplying both sides by \( 2^{\ddp(\Bbbk)} \), we obtain an integral representation for the rescaled zeta-half values,
	\begin{equation}\label{eqn:zhalf:int}
	\begin{aligned}
		\ztw^\half(k_1,\ldots,k_d) &= 2^{\ddp(\Bbbk)} \zeta^\half(k_1,\ldots,k_d) \\
		&  = (-1)^d I(0; 2e_1\, e_0^{k_1-1}\, (2e_1 - e_0)\, e_0^{k_2-1}\, \cdots\, (2e_1 - e_0)  \, e_0^{k_d - 1} ; 1) \,.
		\end{aligned}
	\end{equation}
	
	\subsection{The iterated beta integrals}  We now recall the theory of iterated beta integrals introduced by Hirose and the third named author \cite{HiroseSatoBeta}.  The classical beta function is
	\[
		B(z_1,z_2) = \int_0^1 t^{z_1-1} (1-t)^{z_2-1} \dd t = \frac{\Gamma(z_1)\Gamma(z_2)}{\Gamma(z_1+z_2)} \,.
	\]
	To generalize this, introduce the differential form
	\[
		\ff{x,y}{\alpha,\beta}(t) \coloneqq \frac{\dd t}{(t-x)^\alpha (t-y)^{1-\beta}} \,.
	\]
	We define the iterated beta integral, and some variants, as follows.
	
	\subsubsection*{Finite-finite iterated beta integral}
	
	For points \( x_0,\ldots,x_n \in \C \), with \( x_i \neq x_{i+1} \), parameters \( \alpha_0,\ldots,\alpha_n \in \C \) and some fixed path \( \gamma \) from \( p \in \C\) to \( x_n \in \C \), the {finite-finite iterated beta integral} is defined by
	\[
	\B_\gamma^{\finfin}\left( p; \stR{x_0}{\alpha_0} \stR{x_1}{\alpha_1} \cdots \stL{x_{n-1}}{\alpha_{n-1}} \stL{x_n}{\alpha_n}  \right) \coloneqq {I_\gamma(p;  \ff{x_0,x_1}{\alpha_0,\alpha_1} \, \ff{x_1,x_2}{\alpha_1,\alpha_2} \, \cdots \ff{x_{n-1},x_n}{\alpha_{n-1},\alpha_n}  ; x_n)} \,.
	\]
	
	If \( p \neq x_0 \), we call this the \emph{incomplete} iterated beta integral.  When \( p = x_0 \), we call it the \emph{complete} iterated beta integral, and suppress the lower bound from the notation, to write
	\[
		\B_\gamma^{\finfin}\left(  \stR{x_0}{\alpha_0} \stR{x_1}{\alpha_1} \cdots \stL{x_{n-1}}{\alpha_{n-1}} \stL{x_n}{\alpha_n}  \right) 
		\coloneqq \B_\gamma^{\finfin}\left( x_0; \stR{x_0}{\alpha_0} \stR{x_1}{\alpha_1} \cdots \stL{x_{n-1}}{\alpha_{n-1}} \stL{x_n}{\alpha_n}  \right) \,.
	\]
	
	\subsubsection*{Finite-infinite iterated beta integral}
	
	When the path \( \gamma \)  and extends to \( \infty \in \mathbb{P}^1(\C) \), we define the {finite-infinite iterated beta integral} as
	\[
	\B_\gamma^{\fininf}\left( p; \stR{x_0}{\alpha_0} \stR{x_1}{\alpha_1} \cdots \stL{x_{n-1}}{\alpha_{n-1}} \stL{x_n}{\alpha_n} \right) \coloneqq {I_\gamma(p; \ff{x_0,x_1}{\alpha_0,\alpha_1} \, \ff{x_1,x_2}{\alpha_1,\alpha_2} \, \cdots \ff{x_{n-1},x_n}{\alpha_{n-1},\alpha_n}  ; \infty)} \,.
	\]
	If \( p \neq x_0 \), we call this the \emph{incomplete} iterated beta integral.  When \( p = x_0 \), we call it the \emph{complete} iterated beta integral, and write
	\[
	\B_\gamma^{\fininf}\left( \stR{x_0}{\alpha_0} \stR{x_1}{\alpha_1} \cdots \stL{x_{n-1}}{\alpha_{n-1}} \stL{x_n}{\alpha_n}  \right) 
	\coloneqq \B_\gamma^{\fininf}\left( x_0;  \stR{x_0}{\alpha_0} \stR{x_1}{\alpha_1} \cdots \stL{x_{n-1}}{\alpha_{n-1}} \stL{x_n}{\alpha_n}  \right) \,.
	\]

	\subsubsection*{Evaluation formulas} When \( n = 1 \), the complete iterated beta integrals can be evaluated via the classical beta function \( B(z_1,z_2) = \frac{\Gamma(z_1)\Gamma(z_2)}{\Gamma(z_1 + z_2)} \), as follows:
	\begin{equation}\label{eqn:betafin:neq1}
	\begin{aligned}[c]
		 \B_\gamma^{\finfin}\left(\stR{x}{\alpha} \st{y}{\beta}  \right) &=  (x-y)^{\beta-\alpha} (-1)^\alpha B(1-\alpha, \beta) \,, \\
		 \B_\gamma^{\fininf}\left(\stR{x}{\alpha} \st{y}{\beta}  \right) &=  (x-y)^{\beta-\alpha}  B(1-\alpha, \alpha-\beta) \,.
	\end{aligned}
	\end{equation}

	\subsubsection*{Residue at \texorpdfstring{\( \alpha_0 = 1\)}{alpha\_0=1}} As a function of \( \alpha_0 \), the complete iterated beta integral \( \B^{\finbul}_\gamma \), \( \bullet \in \{ \upf, \infty \} \), has a pole at \( \alpha_0 = 1 \), whose residue is given in terms of the incomplete beta integral:
	\begin{equation}\label{eqn:beta:residue}
		\res_{\alpha_0 = 1} \B_\gamma^{\finbul}\left( \stR{x_0}{\alpha_0} \stR{x_1}{\alpha_1} \cdots \stL{x_{n-1}}{\alpha_{n-1}} \stL{x_n}{\alpha_n} \right) 
		= 
		-(x_0 - x_1)^{\alpha_1 - 1} \B_\gamma^{\finbul}\left( x_0;  \stR{x_1}{\alpha_1} \cdots \stL{x_{n-1}}{\alpha_{n-1}} \stL{x_n}{\alpha_n} \right)  \,.
	\end{equation}
	
	\subsubsection*{Normalized integrals} The normalized beta integral satisfies better properties than the versions above.  We define the normalized version of the complete finite-(in)finite iterated beta integral by
		\begin{equation}\label{eqn:beta:norm}
	\widehat{\B}_\gamma^{\finbul}\left( \stR{x_0}{\alpha_0} \stR{x_1}{\alpha_1} \cdots \stL{x_{n-1}}{\alpha_{n-1}} \stL{x_n}{\alpha_n} \right) \coloneqq \frac{\B_\gamma^{\finbul}\left( \stR{x_0}{\alpha_0} \stR{x_1}{\alpha_1} \cdots \stL{x_{n-1}}{\alpha_{n-1}} \stL{x_n}{\alpha_n}  \right)}{\B_\gamma^{\finbul}\left( \stR{x_0}{\alpha_0}  \st{x_n}{\alpha_n} \right)} \,.
	\end{equation}
	
	The first useful property is that \( \widehat{\B}_\gamma^{\finbul} \), \( \bullet \in \{ \upf, \infty \} \), is holomorphic at \( \alpha_0 = 1 \).  The value is obtained via limit \( \alpha_0 \to 1 \), using the residue formula \eqref{eqn:beta:residue} and the evaluation at \( n = 1 \) from \eqref{eqn:betafin:neq1}.  In particular
	\begin{equation}\label{eqn:normbeta:a0eq1}
	 \widehat{\B}_\gamma^{\finbul}\left( \stR{x_0}{1} \stR{x_1}{\alpha_1} \cdots \stL{x_{n-1}}{\alpha_{n-1}} \stL{x_n}{\alpha_n} \right) 
	= 
	\frac{(x_0 - x_n)^{1-\alpha_n}}{(x_0 - x_1)^{1- \alpha_1}} \B_\gamma^{\finbul}\left( x_0;  \stR{x_1}{\alpha_1} \cdots \stL{x_{n-1}}{\alpha_{n-1}} \stL{x_n}{\alpha_n} \right)  \,.
	\end{equation}
	
	The most important property is that the normalized iterated beta integrals satisfy a translation invariance in the \(\alpha\)-parameters.
	
	\begin{Thm}[Hirose-Sato, \cite{HiroseSatoBeta}]\label{thm:beta:translation}
		For any \( c \in \mathbb{C} \), and \( \bullet \in \{ \upf, \infty \} \), the following translation invariance holds
		\[
			\widehat{\B}_\gamma^{\finbul}\left( \stR{x_0}{\alpha_0} \stR{x_1}{\alpha_1} \cdots \stL{x_{n-1}}{\alpha_{n-1}} \stL{x_n}{\alpha_n} \right)
			= \widehat{\B}_\gamma^{\finbul}\left( \stR{x_0}{\alpha_0 + c} \stR{x_1}{\alpha_1 + c} \cdots \stL{x_{n-1}}{\alpha_{n-1} + c} \stL{x_n}{\alpha_n + c}  \right) \,.
		\]
	\end{Thm}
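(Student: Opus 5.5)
The statement to prove is the Hirose--Sato translation invariance (\autoref{thm:beta:translation}). My plan is to show that both sides, viewed as functions of \( c \), satisfy the same first-order behaviour: the derivative in \( c \) of the normalized integral vanishes. It suffices to work with generic parameters, where all integrals converge, and then extend by analytic continuation in the \( \alpha_i \).

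\textbf{Step 1: reduce to one variable in \( c \).} Consider the shifted integrand. Each form becomes
\[
\ff{x_{i},x_{i+1}}{\alpha_i+c,\alpha_{i+1}+c}(t)=\frac{\dd t}{(t-x_i)^{\alpha_i+c}(t-x_{i+1})^{1-\alpha_{i+1}-c}}\,,
\]
so the forms factor as the original ones multiplied by \( \bigl(\tfrac{t-x_{i+1}}{t-x_i}\bigr)^{c} \). The change of variables \( u=\tfrac{t-x_{i+1}}{t-x_i} \) is not the same for every \( i \), so I will not attempt a single substitution. Instead I will differentiate in \( c \), which produces a sum of insertions of \( \log\frac{t-x_{i+1}}{t-x_i} \) at each slot.

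\textbf{Step 2: integrate by parts.} The key identity I expect is
\[
\dd\Bigl[(t-x_i)^{1-\alpha_i}(t-x_{i+1})^{\alpha_{i+1}}\Bigr]\,,
\]
which expresses a combination of \( \ff{x_i,x_{i+1}}{\alpha_i,\alpha_{i+1}} \) and the neighbouring forms. Using it, together with Stokes' theorem for iterated integrals (the boundary terms at the endpoints \( x_0 \), \( x_n \) or \( \infty \)), I aim to derive a first-order relation in \( c \) of the form
\[
\partial_c \B = \bigl(\text{boundary factor}\bigr)\cdot \B\,.
\]
The boundary factor should depend only on \( \alpha_0 \), \( \alpha_n \), \( x_0 \), \( x_n \), and it should coincide with the one for the depth-one integral \( \B(\stR{x_0}{\alpha_0}\st{x_n}{\alpha_n}) \), which can be checked directly via \eqref{eqn:betafin:neq1}. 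If I can show this, the logarithmic derivatives in \( c \) agree and hence the ratio is constant in \( c \).

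\textbf{Alternative route, to use if Step 2 stalls.} Induct on \( n \) using the composition-of-paths formula, expanding the innermost forms binomially around \( x_n \). Comparing with the depth-one gamma evaluations, I would then establish shift-invariance of the resulting gamma-ratio coefficients by telescoping.

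\textbf{Main obstacle.} The hard part is Step 2: finding the exact exact-form identity that makes the \( c \)-derivative telescope across adjacent slots, and controlling the regularized boundary terms at the singular endpoints. I would first try the case \( n=2 \) explicitly, to identify the correct combination before attempting the general case.
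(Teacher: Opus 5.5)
The paper does not prove this theorem; it quotes it from Hirose--Sato \cite{HiroseSatoBeta}. Your argument therefore has to stand on its own, and it has a genuine gap: Step 2 is not a reduction, it is the theorem itself.

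Write \( \B(\alpha+c) \) for the translated integral. By \eqref{eqn:betafin:neq1}, in the finite-finite case the depth-one integral changes by the factor \( (-1)^c\,\Gamma(1-\alpha_0-c)\Gamma(\alpha_n+c)/\bigl(\Gamma(1-\alpha_0)\Gamma(\alpha_n)\bigr) \). So your relation \( \partial_c\B(\alpha+c)=(\text{boundary factor})\cdot\B(\alpha+c) \) forces the boundary factor to be \( \psi(\alpha_n+c)-\psi(1-\alpha_0-c)+\log(-1) \), with \( \psi=\Gamma'/\Gamma \). That relation is exactly the logarithmic \( c \)-derivative of the statement you want to prove.

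The exact form you propose also cannot deliver it:
\( \dd\bigl[(t-x_i)^{1-\alpha_i}(t-x_{i+1})^{\alpha_{i+1}}\bigr]=(1-\alpha_i)\ff{x_i,x_{i+1}}{\alpha_i,\alpha_{i+1}+1}+\alpha_{i+1}\ff{x_i,x_{i+1}}{\alpha_i-1,\alpha_{i+1}} \).
This relates forms whose exponents differ by integers and contains no logarithm, whereas \( \partial_c \) inserts \( \log\frac{t-x_{i+1}}{t-x_i} \) into every slot. Integrating such an exact form in one slot only produces lower-depth iterated integrals, because the boundary value merges with a neighbouring form. It never produces a digamma multiple of \( \B \) itself, so the hoped-for telescoping has no mechanism.

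The alternative route is also only a sketch. Already for \( n=2 \), a series expansion leaves a terminating balanced \( {}_3F_2 \) that has to be summed by Pfaff--Saalsch\"utz, and you give no identity for general \( n \).

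What your exact form does prove is the case \( c\in\mathbb{Z} \). Set:
\( \omega_k=(t-x_{k-1})^{-\alpha_{k-1}}(t-x_k)^{\alpha_k-1}\dd t \), with \( \omega_k' \) its translate by \( c=1 \);
\( \varphi_k=(t-x_{k-1})^{-\alpha_{k-1}}(t-x_k)^{\alpha_k} \);
\( N_j=I(\omega_1\cdots\omega_j\,\omega_{j+1}'\cdots\omega_n') \), so that \( N_0=\B(\alpha+1) \) and \( N_n=\B(\alpha) \).

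Then \( \dd\varphi_k=\alpha_k\omega_k-\alpha_{k-1}\omega_k' \), hence \( I(\omega_1\cdots\omega_{k-1}\,\dd\varphi_k\,\omega_{k+1}'\cdots\omega_n')=\alpha_kN_k-\alpha_{k-1}N_{k-1} \). On the other hand, integrating out \( \dd\varphi_k \) gives a difference \( T_k-T_{k-1} \) of depth-\( (n-1) \) integrals, since \( \varphi_k\,\omega_{k+1}'=\omega_k\,\varphi_{k+1} \). Summing over \( k \), both sides telescope, and the endpoint terms vanish in the convergent range. This gives \( \alpha_0\B(\alpha+1)=\alpha_n\B(\alpha) \), the same ratio as in depth one, in both the finite-finite and finite-infinite cases.

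So \( c\mapsto\widehat{\B}(\alpha+c) \) is \( 1 \)-periodic. The missing idea is the passage from \( \mathbb{Z} \) to \( \mathbb{C} \). You need either a genuine identity for the \( c \)-derivative, or an analytic uniqueness argument for the periodic ratio (for example a Liouville- or Carlson-type argument, or asymptotics in \( c \)). The latter requires real estimates on \( \B(\alpha+c) \) as \( |\operatorname{Im} c|\to\infty \), and none of this is in your proposal.
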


    \section{Proof of the convolution formula}\label{sec:mainproof}
    
The goal of this section is to gives a proof of the convolution formula (\autoref{thm:main:conv}).  This is the missing step to establish the Main Theorem (\autoref{thm:main:desc}), from which we obtain the explicit Galois descent for M$t$V's of maximal height \( t(k_1,\ldots,k_d) \), \( k_d \geq 2 \). \medskip

Firstly, using the integral representation \eqref{eqn:zhalf:int}, we have that
\[
	\ztw^\half(\ell_1,\ldots,\ell_r) = (-1)^r I(0; 2e_1\,e_0^{\ell_1-1} \,(2e_1 - e_0)\, e_0^{\ell_2-1}\, \cdots (2e_1 - e_0)\, e_0^{\ell_r-1}; 1)
\]
Under the change of variables \( t = s^{-1} \), the end points of the path change from \( 0 \) and \( 1 \) to \( \infty \) and \( 1 \) respectively.  We also have
\begin{align*}
	& e_1(t) = \frac{\dd t}{t - 1} = \frac{\dd  s^{-1}}{s^{-1} - 1} = \frac{\dd s}{s(s-1)} = \frac{\dd s}{s-1} - \frac{\dd s}{s} = e_1(s) - e_0(s) \,, \text{ and likewise} \\
	& e_0(t) = \frac{\dd t}{t} = \frac{\dd s^{-1}}{s^{-1}} = -\frac{\dd s}{s} = e_0(s) \,.
\end{align*}
Hence
\begin{align*}
	\ztw^\half(\ell_1,\ldots,\ell_r) &= (-1)^{\sum \ell_i} I(\infty; 2(e_1-e_0)\,e_0^{\ell_1-1}\, (2e_1 - e_0)\, e_0^{\ell_2-1}\, \cdots \,(2e_1 - e_0) \,e_0^{\ell_r-1}; 1) \\
	& = I(1; e_0^{\ell_r-1}\, (2e_1 - e_0)\, e_0^{\ell_{r-1}-1} \,\cdots\, (2e_1 - e_0)\, e_0^{\ell_1-1}\, 2(e_1-e_0); \infty) \,,
\end{align*}
where the last line follows by reversal of paths.

If we now restrict to \( (\ell_1,\ldots,\ell_r) \in \mathbb{I}_{\ooe} \), i.e. \( \ell_1,\ldots,\ell_{r-1}\) are odd, and \( \ell_r \) is even, we can write
\begin{equation}\label{eq:zeta_half_int}
	\ztw^\half(\ell_1,\ldots,\ell_r) =
	I(1;f_{1,0}\,(f_{0,1}\,f_{1,0})^{\frac{\ell_{r}-2}{2}}\,f_{0,0}\,\cdots\,(f_{0,1}\,f_{1,0})^{\frac{\ell_{2}-1}{2}}f_{0,0}\,(f_{0,1}\,f_{1,0})^{\frac{\ell_{1}-1}{2}}\,(f_{0,0}-f_{0,1});\infty) \,,	 
\end{equation}
where
\[
f_{a,b}(t)\coloneqq\begin{cases}
2e_{1}(t)-e_{0}(t) & (a,b)=(0,0)\\
2e_{-1}(t)-e_{0}(t) & (a,b)=(1,1)\\
e_{0}(t) & \text{$(a,b)=(0,1)$ or $(1,0)$}\,.
\end{cases}
\]

We now have the following proposition, to rewrite this expression.

\begin{Prop}
\label{prop:Iterated_integral_relation_half}Let $k\geq2$, and set  \( \omega(t) = \frac{\dd t}{\sqrt{t} \, (t-1)} \).  For a
general sequence $a_{1},a_{2},\ldots,a_{k},a_{k}'\in\{0,1\}$, we
have
\begin{align*}
& I(1;f_{1,a_{1}}\,f_{a_{1},a_{2}}\,f_{a_{2},a_{3}}\,\cdots\, f_{a_{k-2},a_{k-1}}\,(f_{a_{k-1},a_{k}}-f_{a_{k-1},a_{k}'});\infty) \\
&= -\frac{1}{\ii \pi}I(0;\omega \, e_{a_{1}} \, e_{a_{2}} \, \cdots \, e_{a_{k-1}} \, (e_{a_{k}}-e_{a_{k}'});-\infty)\,.
\end{align*}
\end{Prop}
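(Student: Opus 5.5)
The plan is to realize both sides of \autoref{prop:Iterated_integral_relation_half} as specializations of a single family of normalized iterated beta integrals, and to pass from one to the other with the translation invariance \autoref{thm:beta:translation} by the shift \( c = \half \). The left-hand side, with forms \( f_{a,b} \) built from \( e_0, e_{\pm1} \) and integrated from \( 1 \) to \( \infty \), should be a finite--infinite integral \( \B^{\fininf} \) with integer parameters \( \alpha_i \in \{0,1\} \). The right-hand side, involving \( \omega = \frac{\dd t}{\sqrt{t}\,(t-1)} \), which is exactly \( \ff{1,0}{1,\half} \) (or \( \ff{0,1}{\half,0} \) read in the other order), should be the same integral with half-integer parameters.

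\textbf{Step 1: encode the letters as beta forms.} I would first write each letter as a beta form. The index pair \( (a_{i-1},a_i) \) governs which point and which exponent sit on consecutive forms \( \ff{x_{i-1},x_i}{\alpha_{i-1},\alpha_i} \). The diagonal letters \( f_{0,0} = 2e_1 - e_0 \) and \( f_{1,1} = 2e_{-1}-e_0 \) are the ones that do not factor directly. For these I would use the substitution \( t = u^2 \), under which \( 2e_{\pm1}(t) - e_0(t) \) becomes a combination of \( e_{\pm1}(u), e_{\mp 1}(u) \) (one checks \( 2e_1(t)-e_0(t) = 2(e_1(u)+e_{-1}(u)) - 2e_0(u) \)). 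Alternatively I would use a two-point beta form \( \frac{\dd t}{(t-x)^{\alpha}(t-y)^{1-\beta}} \) with \( \alpha,\beta \) chosen so that it reduces to the needed rational form. The aim is a word \( \ff{x_0,x_1}{\alpha_0,\alpha_1}\cdots\ff{x_{k-1},x_k}{\alpha_{k-1},\alpha_k} \) in which the points \( x_i \) are determined by the \( a_i \) and the initial letter \( f_{1,a_1} \) corresponds to \( \alpha_0 = 1 \).

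\textbf{Step 2: reduce to a normalized beta integral.} Because the left-hand side starts at \( x_0 = 1 \) with \( \alpha_0=1 \), it is an incomplete integral. Via \eqref{eqn:normbeta:a0eq1} it equals, up to the explicit prefactor \( (x_0-x_n)^{1-\alpha_n}/(x_0-x_1)^{1-\alpha_1} \), a normalized complete integral \( \widehat{\B}^{\fininf} \) evaluated at \( \alpha_0=1 \). I would keep the parameters generic, treating the final difference \( f_{a_{k-1},a_k}-f_{a_{k-1},a_k'} \) by linearity as two separate words. This keeps the limit \( \alpha_0\to1 \) and the residue formula \eqref{eqn:beta:residue} legitimate, and the difference is what guarantees convergence at \( \infty \).

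\textbf{Step 3: shift and identify the right-hand side.} Apply \autoref{thm:beta:translation} with \( c = -\half \), or \( +\half \). All exponents become half-integers, and in the product of consecutive forms the factors \( (t-x_i)^{-\alpha_i}(t-x_i)^{\alpha_i-1} \) telescope back to rational forms \( e_{a_i} \), except at the ends. At the start a surviving \( t^{-1/2}(t-1)^{-1} \) produces \( \omega \), and the endpoint moves to the point \( 0 \) with path to \( -\infty \) after a M\"obius or sign change \( t\mapsto -t \) or \( t\mapsto 1-t \) using functoriality. Then I undo the normalization with \eqref{eqn:betafin:neq1}. The ratio of the two depth-one normalizers is \( B(\half,\half)=\pi \) times a power \( (-1)^{\half} = \ii \), which should give exactly the constant \( -\frac{1}{\ii\pi} \).

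I expect the main obstacle to be Step 1 together with the endpoint bookkeeping in Step 3. The difficulty is finding the precise beta-form encoding of the diagonal letters \( f_{0,0}, f_{1,1} \), with the correct choice of points (likely \( \pm1 \) after \( t=u^2 \)), so that after the half-shift everything telescopes to \( e_{a_i} \) and the branch of \( \sqrt{t} \) on the new path gives the sign \( \ii \) rather than \( -\ii \). I would first verify the case \( k=2 \) by hand against \eqref{eqn:betafin:neq1} to fix these conventions, and then run the general word.
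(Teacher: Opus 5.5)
\textbf{There is a genuine gap in Step 1.} Your outer skeleton is the paper's: normalize, use \eqref{eqn:normbeta:a0eq1}, shift by \( c=-\half \), and un-normalize with \eqref{eqn:betafin:neq1} to get \( \B(\half,\half)=\pi \) and a factor \( \ii \). But the step you flag as the main obstacle is where the proof actually lives, and neither route you suggest works.

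In the variable \( t \) the left-hand side is not an iterated beta integral for any choice of exponents. We have \( f_{0,0}(t)=\frac{(t+1)\,\dd t}{t(t-1)} \) and \( f_{1,1}(t)=\frac{(t-1)\,\dd t}{t(t+1)} \). Each involves three points: poles at \( 0,\pm1 \) and a zero at \( -1 \), resp.\ \( +1 \). A form \( \ff{x,y}{\alpha,\beta} \) involves only two points. So your claim that the left side is a \( \B^{\fininf} \) with integer parameters in \( \{0,1\} \) is false.

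The substitution \( t=u^2 \) does not help either. It turns \( f_{0,0} \) into the three-term combination \( 2e_1(u)+2e_{-1}(u)-2e_0(u) \). It turns \( f_{1,1} \) into \( 2e_{\ii}(u)+2e_{-\ii}(u)-2e_0(u) \), not a combination of \( e_{\pm1} \) as you claim. Neither is a word of the shape \( \ff{x_0,x_1}{\alpha_0,\alpha_1}\ff{x_1,x_2}{\alpha_1,\alpha_2}\cdots \), which is what \autoref{thm:beta:translation} requires. The paper does use \( t=u^2 \), but only in the next proposition, on the \( \omega,e_0,e_1 \) side.

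\textbf{The missing idea} is the quadratic substitution \( u=-\frac{(t-1)^2}{4t} \). It is invariant under \( t\mapsto t^{-1} \) and branched exactly over \( u=0 \) (from \( t=1 \)) and \( u=1 \) (from \( t=-1 \)). Under it
\[ f_{0,0}=\tfrac{\dd u}{u}=\ff{0,0}{\half,\half}(u),\quad f_{1,1}=\tfrac{\dd u}{u-1}=\ff{1,1}{\half,\half}(u),\quad f_{0,1}=f_{1,0}=\tfrac{\dd u}{\sqrt{u(u-1)}}=\ff{0,1}{\half,\half}(u), \]
and the path \( 1\to\infty \) becomes \( 0\to-\infty \).

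So the left-hand side is the incomplete integral \( \B^{\fininf} \) starting at \( u=0 \). Its points are \( (1,a_1,\ldots,a_k) \) and all exponents are \( \half \), with the last one regularized as \( 1-\alpha \), \( \alpha\to\half^+ \). This reverses your parameter assignment: the left side carries the half-integer exponents. The right side has points \( (0,1,a_1,\ldots,a_k) \) and exponents \( (\half,0,\ldots,0) \), since \( \ff{0,1}{\half,0}=\omega \) and \( \ff{1,a_1}{0,0}=e_{a_1} \).

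This also repairs your Step 2. The \( \alpha_0=1 \) in \eqref{eqn:normbeta:a0eq1} is not attached to the letter \( f_{1,a_1} \): an integral that starts at the first point of its word is complete, not incomplete. Instead it is attached to the auxiliary lower endpoint \( x_0=0 \), and the prefactor \( (0-1)^{\half}=\ii \) is where the \( \ii \) comes from.

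After the shift by \( -\half \), the word is literally \( \omega\,e_{a_1}\cdots e_{a_k} \) on the same path \( 0\to-\infty \). No further M\"obius change is needed. The normalizer with points \( (0,a_k) \) and exponents \( (\half,\half-\alpha) \) then supplies \( \B(\half,\alpha)\to\pi \), as you anticipated.
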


\begin{proof}
	Firstly, under the change of variables \( u = \big( \frac{\sqrt{t} - 1/\sqrt{t}}{2\ii} \big)^2 = -\frac{(t-1)^2}{4t} = -\frac{t - 2 + t^{-1}}{4} \), the forms \( f_{a,b}(t) \) transform as follows,
	\begin{align*}
		& f_{0,0}(t) = \frac{\dd u}{u} = \ff{0,0}{\half,\half}(u) \,, \quad f_{1,1}(t) = \frac{\dd u}{u-1} = \ff{1,1}{\half,\half}(u) \,, \\
		& f_{0,1}(t) = f_{1,0}(t) = \frac{\dd u}{\sqrt{u(u-1)}} = \ff{0,1}{\half,\half}(u) = \ff{1,0}{\half,\half}(u) \,.
	\end{align*}
	Moreover, the integration path from 1 to \( \infty \) along the positive real axis in \( t \)-coordinates is transformed into the straight line path from 0 to \( -\infty \) along the the negative real axis in \( u \)-coordinates.
	
	Therefore, we can directly write
	\begin{align*}
 & I(1;f_{1,a_{1}}\,f_{a_{1},a_{2}}\,f_{a_{2},a_{3}}\,\cdots \,f_{a_{k-2},a_{k-1}}\,(f_{a_{k-1},a_{k}}-f_{a_{k-1},a_{k}'});\infty)\\
& =\lim_{\alpha \to \half^+}
\left( 
\B^\fininf\left(0; \stR{1}{\half} \stR{a_1}{\half} \cdots \stL{a_{k-1}}{\half} \stL{a_k}{1-\alpha} \right)
-
\B^\fininf\left(0; \stR{1}{\half} \stR{a_1}{\half} \cdots \stL{a_{k-1}}{\half} \stL{a_k'}{1-\alpha} \right)
\right) \,,
\end{align*}
and likewise
	\begin{align*}
& I(0; \omega \, e_{a_1} \, e_{a_2} \, \cdots \, e_{a_{k-1}} \, (e_{a_{k}} - e_{a_{k}'}) ; -\infty)\\
& =\lim_{\alpha \to 0^-}
\left( 
\B^\fininf\left(\stR{0}{\half} \stR{1}{0} \stR{a_1}{0} \cdots \stL{a_{k-1}}{0} \stL{a_k}{\alpha} \right)
-
\B^\fininf\left(\stR{0}{\half} \stR{1}{0} \stR{a_1}{0} \cdots \stL{a_{k-1}}{0} \stL{a_k'}{\alpha} \right)
\right) \,.
\end{align*}
Using the theory of iterated beta integrals, we shall relate these two expressions to establish the proposition.  By the formula \eqref{eqn:normbeta:a0eq1} for the value of \( \widehat{\B}^\fininf \), when \( \alpha_0 = 1 \), we have 
\begin{align*}
 & \frac{(-z_{n})^{\alpha}}{\ii\sqrt{z_{1}}}\B^\fininf\left( 0; \stR{z_1}{\half} \stR{z_2}{\half}  \cdots \stL{z_{n-1}}{\half} \stL{z_n}{1-\alpha} \right) \\
 & =\widehat{\B}^\fininf\left( \stR{0}{1} \stR{z_1}{\half} \stR{z_2}{\half}  \cdots \stL{z_{n-1}}{\half} \stL{z_n}{1-\alpha} \right)  \,.
\end{align*}
Applying translation invariance in \autoref{thm:beta:translation} (with \( c = -\half \)) gives
\[
 =\widehat{\B}^\fininf\left( \stR{0}{\half} \stR{z_1}{0} \stR{z_2}{0}  \cdots \stL{z_{n-1}}{0} \stL{z_n}{\half-\alpha} \right) \,.
\]
Expressing this in terms of \( \B^\fininf \) by definition of the normalized integral \eqref{eqn:beta:norm}, and applying the evaluation formula \eqref{eqn:betafin:neq1} for \( \B^\fininf_\gamma\left( \stR{x}{\alpha} \st{y}{\beta} \right)\) gives
\begin{align*}
& = \frac{\B^\fininf\left(\stR{0}{\half}  \stR{z_1}{0} \stR{z_2}{0}  \cdots \stL{z_{n-1}}{0} \stL{z_n}{\half-\alpha} \right)}{\B^\fininf\left( \stR{0}{\half} \st{z_n}{\half-\alpha} \right)} \\
& = \frac{(-z_n)^\alpha}{\B(\half, \alpha)}  \B^\fininf\left(\stR{0}{\half} \stR{z_1}{0} \cdots \stL{z_{n-1}}{0} \stL{z_n}{\half-\alpha} \right) \,,
\end{align*}
where here \( \B(\alpha,\beta) = \frac{\Gamma(\alpha)\Gamma(\beta)}{\Gamma(\alpha+\beta)} \) is the usual beta function.  After cancelling the factor \( (-z_1)^\alpha \) from both sides of this overall equality, we obtain
\begin{align*}
\B^\fininf\left(  0; \stR{z_1}{\half} \stR{z_2}{\half}  \cdots \stL{z_{n-1}}{\half} \stL{z_n}{1-\alpha} \right) 
 = -\frac{\sqrt{z_{1}}}{\ii\B(\half, \alpha)}  \B^\fininf\left(\stR{0}{\half}  \stR{z_1}{0} \cdots \stL{z_{n-1}}{0} \stL{z_n}{\half-\alpha} \right) \,,
\end{align*}
Notice that both sides are convergent for \( \Re(\alpha) > \half \).  Setting \( (z_1,z_2,\ldots.z_n) = (1, a_1, a_2,\ldots, a_{k-1}, b) \), with \( b \in \{ a_k, a_k' \} \), and taking the difference of the two resulting equalities gives the identity
\begin{align*}
	& \B^\fininf\left( 0; \stR{1}{\half} \stR{\alpha_1}{\half} \stR{\alpha_2}{\half}  \cdots \stL{\alpha_{k-1}}{\half} \stL{\alpha_k}{1-\alpha} \right) 
	- \B^\fininf\left( 0; \stR{1}{\half} \stR{\alpha_1}{\half} \stR{\alpha_2}{\half}  \cdots \stL{\alpha_{k-1}}{\half} \stL{\alpha_k'}{1-\alpha}  \right)  \\
	=
	& -\frac{1}{\ii\B(\half, \alpha)} \left( \B^\fininf\left( \stR{0}{\half} \stR{1}{0} \stR{a_1}{0} \cdots \stL{a_{k-1}}{0} \stL{a_k}{\half-\alpha} \right)
	-
	\B^\fininf\left( \stR{0}{\half} \stR{1}{0} \stR{a_1}{0} \cdots \stL{a_{k-1}}{0} \stL{a_k'}{\half-\alpha} \right) \right) \,.
\end{align*}
Letting \( \alpha \to\half^+ \), we have \( \B(\half,\alpha) \to \B(\half,\half) = \pi \), and hence obtain the desired result.
\end{proof}

We then express the integral over \( e_{a_i} \) as a convolution of multiple $t$-values and multiple zeta star values.

\begin{Prop}\label{prop:iterated integral expression for convoluted sum}
	For \( d \geq 1 \), and \( k_1, \ldots, k_d \geq 2 \), we have
	\[
		-\frac{1}{\ii \pi} I(0; \omega \, e_0^{k_1-1} \, e_1 \, e_0^{k_2-1} \, \cdots \, e_1 \, e_0^{k_d-1} \, (e_1 - e_0); -\infty) = \sum_{i=0}^d (-1)^i \ttw(k_1,\ldots,k_i) \zeta^\star(k_d,\ldots,k_{i+1}) \,.
	\]	
	Here \( \omega(t) = \frac{\dd t}{\sqrt{t} \, (t-1)} \), and the branch of \( \sqrt{t} \) is chosen to be in \( \ii \mathbb{R}_{>0} \) for \( t \in \mathbb{R}_{<0} \), i.e. along the integration path.
\end{Prop}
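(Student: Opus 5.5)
The plan is to deform the integration path \(0 \to -\infty\) onto the positive real axis, so that the iterated integral splits by composition of paths into a piece over \(0 \to 1\) and a piece over \(1 \to \infty\). The first piece should give multiple \(t\)-values and the second zeta-star values.

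\emph{Step 1: deform the path.} On \(\C\setminus[0,\infty)\), or equivalently the upper half plane, choose \(\sqrt{t}\) with \(\sqrt{t}\in \ii\mathbb{R}_{>0}\) on \(\mathbb{R}_{<0}\). It then continues to \(\sqrt{t}>0\) on the upper edge of \(\mathbb{R}_{>0}\). The last form \(e_1-e_0=\frac{\dd t}{t(t-1)}\) is \(O(|t|^{-2})\) at infinity. Together with the at most polylogarithmic growth of the inner iterated integrals, this should make the contribution of a large arc from \(+\infty\) to \(-\infty\) through the upper half plane vanish. So the integral along \(0\to-\infty\) equals the integral along \(0\to+\infty\), passing just above the pole at \(t=1\). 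The integrand has singularities only at \(0\), \(1\) and \(\infty\), so by homotopy invariance I would write this path as \(\gamma=\gamma_1\,\delta\,\gamma_2\). Here \(\gamma_1\) runs from \(0\) to \(1\), \(\delta\) is a small half-circle around \(1\), and \(\gamma_2\) runs from \(1\) to \(\infty\), with tangential-basepoint regularisation at \(1\). Composition of paths then expands \(I\) as a sum over ways of cutting the word \(\omega\,e_0^{k_1-1}e_1\cdots e_1 e_0^{k_d-1}(e_1-e_0)\). The half-circle \(\delta\) contributes only through \(e_1\), via factors \(\pm \ii\pi\), and through \(\omega\), which has a simple pole at \(1\) with residue \(1\).

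\emph{Step 2: identify the pieces.} On \(\gamma_1\), substitute \(t=s^2\). Then \(\omega = \frac{2\,\dd s}{s^2-1}\), \(e_0(t)=2e_0(s)\) and \(e_1(t)=(e_1+e_{-1})(s)=\frac{2s\,\dd s}{s^2-1}\). Comparing with the representation of \(t(k_1,\ldots,k_i)\) given before \eqref{eqn:int:ttw} (before the \(t=\sqrt{s}\) substitution), \(I(0;\omega\,e_0^{k_1-1}e_1\cdots e_1e_0^{k_i-1};1)\) should equal \((-1)^i\ttw(k_1,\ldots,k_i)\) up to a fixed sign and power of \(2\) that I would pin down. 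On \(\gamma_2\), substitute \(t=1/s\) as in the computation of \(\ztw^\half\) at the start of \autoref{sec:mainproof}. This sends \(e_1\mapsto e_1-e_0\) and \(e_0\mapsto e_0\), and after reversal of paths the tail word \(e_1e_0^{k_{i+1}-1}\cdots e_1e_0^{k_d-1}(e_1-e_0)\) becomes exactly the word in \eqref{eqn:zstar:int} for \(\zeta^\star(k_d,\ldots,k_{i+1})\). The ``cut right before \(e_1\)'' terms therefore reproduce the products \(\ttw(k_1,\ldots,k_i)\,\zeta^\star(k_d,\ldots,k_{i+1})\).

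\emph{Step 3: the main obstacle.} Everything else must cancel. This means the cuts falling inside a block \(e_0^{k_j-1}\), and all the \(\ii\pi\)-contributions from \(\delta\), must combine into the overall factor \(-\ii\pi\) times the displayed sum. I expect this bookkeeping to be the hardest step. I would first try writing the \(\gamma_2\)-part regularised at \(1\), so that the leading \(e_0\)'s in a tail (where \(e_0\) is regular at \(1\)) are absorbed. I would then use the shuffle regularisation to show that cuts inside \(e_0\)-blocks telescope against the \(e_1\)-cuts. This should leave \(\ii\pi\) as the only transcendental factor, coming from the residue of \(\omega\) at \(1\) on \(\delta\).

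As a consistency check, \(\sqrt{t}\) is real on \(\gamma_1\) and \(\gamma_2\), so the real parts of the deformed integral must reorganise to give \(I\in \ii\mathbb{R}\), as required for the left-hand side to be real. If the telescoping fails, I would instead average the upper and lower deformations, where \(\sqrt{t}\) differs by sign on \(\mathbb{R}_{>0}\), so that non-residue terms cancel by symmetry. Finally I would check \(d=1\), \(k_1=2\) numerically against \(-\ttw(2)+\zeta^\star(2)\).
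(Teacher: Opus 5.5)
Your Step~1 is fine. The paper reaches the same indented path via $u=\sqrt t$, and your arc estimate also works. Your fallback of comparing the upper and lower deformations is also sound: there $\sqrt t$, hence $\omega$, changes sign on $\mathbb{R}_{>0}$ and the path is replaced by its conjugate. This validly proves that the integral $I$ is purely imaginary, and is equivalent to the paper's $u\mapsto -u$ argument.

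However, Steps~2--3 misidentify where the products come from, and this is a genuine gap. The terms with nothing on $\delta$ and a cut just before an $e_1$ are real. Since the statement forces $I\in\ii\mathbb{R}$, these real terms must in fact sum to zero, so they cannot be the source of the right-hand side. Your tail $e_1e_0^{k_{i+1}-1}\cdots e_1e_0^{k_d-1}(e_1-e_0)$ also does not give $\zeta^\star(k_d,\ldots,k_{i+1})$. After reversal and $t\mapsto 1/s$, its leading $e_1$ becomes a final $e_1-e_0$ at the endpoint $1$. So you get a regularised object of ``$\zeta^\star(k_d,\ldots,k_{i+1},1)$'' type, not the word in \eqref{eqn:zstar:int}. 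The telescoping you hope for in Step~3 has no mechanism behind it. Tellingly, your plan never uses the hypothesis $k_j\ge 2$, which any correct argument must use.

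The missing idea is a vanishing lemma for the half-circle. Write $I=\sum_{W_1W_2W_3=W} I(0;W_1;1_-)\,I_\delta(1_-;W_2;1_+)\,I(1_+;W_3;\infty)$. The infinitesimal half-circle integral of a word vanishes as soon as the word contains a letter holomorphic at $1$, such as $e_0$. Because every $k_j\ge2$, every subword of $W$ of length $\ge2$ contains an $e_0$. So only $W_2=\mathbbm{1}$, or $W_2$ a single letter among $\omega$, the $e_1$'s and $e_1-e_0$, survives. Each such letter contributes $-\ii\pi$, since $\omega$ and $e_1-e_0$ agree with $e_1$ up to forms holomorphic at $1$.

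The $W_2=\mathbbm 1$ part is real, and the rest is $-\ii\pi$ times a real number. Hence $I\in\ii\mathbb R$ forces the real part to vanish outright, with no telescoping needed. In the surviving terms the pole letter sits on $\delta$:
\begin{itemize}
\item the prefix is $\omega e_0^{k_1-1}e_1\cdots e_1e_0^{k_i-1}$, which gives exactly $(-1)^i\ttw(k_1,\ldots,k_i)$ by \eqref{eqn:int:ttw}, since $\omega=e_1-\frac{\dd t}{\sqrt t(1+\sqrt t)}$;
\item the tail is $e_0^{k_{i+1}-1}e_1\cdots e_1e_0^{k_d-1}(e_1-e_0)$, without the leading $e_1$, which gives $\zeta^\star(k_d,\ldots,k_{i+1})$.
\end{itemize}
The choices $W_2=\omega$ and $W_2=e_1-e_0$ give the $i=0$ and $i=d$ terms respectively.
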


\begin{proof}
Let $W\coloneqq\omega e_{0}^{k_{1}-1}e_{1}e_{0}^{k_{2}-1}\cdots e_{1}e_{0}^{k_{d}-1}(e_{1}-e_{0})$, then $I\left(0;W;-\infty\right)$ be the iterated integral on the left-side of the equality.  Under the change of variable $u=\sqrt{t}$,
the differential forms transform into
\begin{align*}
e_{0}(t) & = \frac{\dd t}{t} = \frac{\dd u^2}{u^2} = \frac{2u \dd u}{u^2} = 2e_{0}(u) \,, \\
e_{1}(t) & =e_{0}(t)\frac{t}{t-1}=\frac{2u\dd u}{u^{2}-1}=e_{1}(u)+e_{-1}(u) \,, \\
\omega(t) & =e_{0}(t)\frac{\sqrt{t}}{t-1}=\frac{2\dd u}{u^{2}-1}=e_{1}(u)-e_{-1}(u) \,.
\end{align*}
Likewise, the straight line integration path from \( 0 \) to \( -\infty \) is transformed into the straight line path from \( 0 \) to \( \ii\infty \) along the positive imaginary axis.  We thus have the equality
\begin{align*}
& I(0;W;-\infty) \\
& = I(0;(e_{1}-e_{-1})\,(2e_{0})^{k_{1}-1}\,(e_{1}+e_{-1})\,(2e_{0})^{k_{2}-1}\,\cdots\,(e_{1}+e_{-1})\,(2e_{0})^{k_{d}-1}\,(e_{1}+e_{-1}-2e_{0});\ii\infty) 
\end{align*}
By homotopy of paths (the first form has no pole at \( 0 \), and the final form has no pole at \( \ii \infty = +\infty \in \mathbb{P}^1(\mathbb{C}) \), so the integral is independent of tangent vectors), this equals
\begin{align*}
& = I_\gamma(0;(e_{1}-e_{-1})\,\underbrace{(2e_{0})^{k_{1}-1}\,(e_{1}+e_{-1})\,(2e_{0})^{k_{2}-1}\,\cdots\,(e_{1}+e_{-1})\,(2e_{0})^{k_{d}-1}\,(e_{1}+e_{-1}-2e_{0})}_{\eqqcolon W'}; +\infty) \,,
\end{align*}
where \( \gamma \) is the path from \( 0 \) to \( +\infty \) along the positive real axis, indented upwards at 1 to avoid the pole.

We claim now that the integral is purely imaginary.  Notice the path \( -\gamma \) is homotopic to the path \( \overline{\gamma} \).  Write \( W' \), as indicated above, for the forms appearing after position one in the integrand; under the change of variables \( u \mapsto -u \), we then have
\begin{align*}
	I_\gamma(0; (e_1-e_{-1}) \, W'; +\infty) 
	&= 
	I_{-\gamma}(0; (e_{-1}-e_1)\, W'; -\infty) \\
	&= 
	I_{\overline{\gamma}}(0; (e_{-1}-e_1)\, W'; +\infty) \\
	&= 
	-\overline{ I_{\gamma}(0; (e_1-e_{-1})\, W'; +\infty) } \,.
\end{align*}
This establishes that the integral is purely imaginary:
\begin{equation}\label{eq:purely-imaginary}
	I(0; W; -\infty) = I_\gamma(0; (e_1-e_{-1}) \, W'; +\infty) \in \ii\mathbb{R} \,.
\end{equation}
Returning to the original variable \( t = u^2 \), we note that the path \( \gamma \) in the \( u \)-plane is transformed to a path homotopic to \( \gamma \) in the \( t \)-plane.  Thus we have
\[
	I(0; W; -\infty) = I_\gamma(0; W, +\infty) \,.
\]

Now let $1_{\pm}$ denote the tangential basepoints at
$1$ with tangential vector $\pm1$.  We shall decompose the path \( \gamma \) into three sections: the straight line from 0 to \( 1_- \), a small negatively-oriented loop upwards from \( 1_- \) to \( 1_{+} \), and the straight line path from \( 1_+ \) to \( + \infty \).  By the decomposition of paths formula
\[
	I_\gamma(0; W; +\infty) = \sum_{W_1 W_2 W_3 = W} I(0; W_1; 1_-) \, I_C(1_-; W_2; 1_+) I(1+; W_3; +\infty) \,,
\]
where the sum is taken over all ways of deconcatenating \( W \) into three subwords.

By the assumption that \( k_1,\ldots,k_d \geq 2 \), any subword of length \( \geq 2 \) contains the form \( e_0 \), which has no pole at \( 1 \).  In particular, if \( W_2 \) has length \( \geq 2 \), then \( W_2 \) ends in \( e_0e_1^n \), for some \( 0 \leq n \leq 1 \).  By shuffle regularization, such a word \( W_2 \) is a polynomial of degree \( n \) in \( e_1 \), whose coefficients are words (of complementary length) ending in \( e_0 \).  But any integral of the form \( I_C(1_-; \cdots\,e_0; 1_+) \) vanishes as \( I_C(1_-; \cdots\,e_0; 1_+)= I_C(1_-; \cdots\,e_0; 1_-) = 0 \) by the homotopy rotating the final tangent vector \( 1_+ \) to \( 1_- \).  Hence \( I_C(1_-; W_2; 1_+) = 0 \), whenever \( W_2 \) has length \( \geq 2 \).

From this, we only need to consider summands where \( W_2 = \mathbbm{1}, \omega, e_1 \) or \( e_1 - e_0 \).  In other words,
\[
	I_\gamma(0; W; +\infty) = S_0 + S_1 \,,
\]
where
\begin{align*}
\arraycolsep=0pt%
\renewcommand{\arraystretch}{1.5}
\begin{array}{rcl}
	S_0 = {} & \displaystyle \sum_{W_1 W_3 = W} & I(0; W_1; 1_-) I(1_+; W_3; \infty) \\[\baselineskip]
	S_1 = {} & \displaystyle \sum_{\substack{W_1 W_2 W_3 = W \\ W_2 \in \{ \omega, e_1, e_1-e_0\}}} & I(0; W_1; 1_-) I(1_-; W_2; 1_+) I(1_+; W_3; \infty) \,.
	\end{array}
\end{align*}
Note that
\[
\omega(t)=e_{1}(t)-\overbrace{\frac{dt}{\sqrt{t}(1+\sqrt{t})}}^{\mathclap{\text{holomorphic at $t=1$}}} \,.
\]
It therefore follows that, for \( W_2 \in \{ \omega, e_1, e_1-e_0 \} \), one has
\[
	I_C(1_-; W_2; 1+) = I_C(1_-; e_1; 1_+) = -\ii\pi \,.
\]
Thus
\[
	S_1 =  -\ii \pi \sum_{\substack{W_1 W_2 W_3 = W \\ W_2 \in \{ \omega, e_1, e_1-e_0\}}} I(0; W_1; 1_-)  I(1_+; W_3; \infty) \,.
\]
We see now that \( S_0 \in \mathbb{R} \), while \( S_1 \in \ii\mathbb{R} \), which -- together with \eqref{eq:purely-imaginary} -- implies that \( S_0 = 0 \).

Finally, we express \( S_1 \) in terms of multiple $t$-values and multiple zeta star values.  We have
\begin{align*}
	-\frac{1}{\ii \pi} S_1 = {} & \sum_{\substack{W_1 W_2 W_3 = W \\ W_2 \in \{ \omega, e_1, e_1-e_0\}}} I(0; W_1; 1_-)  I(1_+; W_3; \infty) \\
	= {}  & I(1_{+};e_{0}^{k_{1}-1}\,e_{1}\,e_{0}^{k_{2}-1}\,\cdots\, e_{1}\,e_{0}^{k_{d}-1}\,(e_{1}-e_{0});+\infty)\\
	& {} + \sum_{i=1}^{d-1}I(0; \omega \, e_{0}^{k_{1}-1} \, e_{1 \, }e_{0}^{k_{2}-1} \, \cdots e_{1} \, e_{0}^{k_{i}-1};1_{-}) \, I(1_{+};e_{0}^{k_{i+1}-1}\,e_{1}\,e_{0}^{k_{2}-1}\,\cdots\, e_{1}\,e_{0}^{k_{d}-1}\,(e_{1}-e_{0});+\infty) \\
	& +I(0;\omega\, e_{0}^{k_{1}-1}\, e_{1}e \, _{0}^{k_{2}-1}\,\cdots \,e_{1} \, e_{0}^{k_{d}-1};1_{-}) \,,
\end{align*}
where the integrals no longer depend on tangent vectors.
From the integral representation \eqref{eqn:int:ttw}, we have
\[
	I(0; \omega \, e_{0}^{k_{1}-1} \, e_{1 \, }e_{0}^{k_{2}-1} \, \cdots e_{1} \, e_{0}^{k_{i}-1};1) = (-1)^i \ttw(k_1,k_2,\ldots,k_i) \,.
\]
After reversing the path, and applying the change of variables \( t \mapsto t^{-1}  \), we also have
\begin{align*}
	& I(1;e_{0}^{k_{i+1}-1}\,e_{1}\,e_{0}^{k_{2}-1}\,\cdots\, e_{1}\,e_{0}^{k_{d}-1}\,(e_{1}-e_{0});+\infty) \\
	&=(-1)^{\sum_{j=i+1}^{d}k_{j}}I(+\infty;(e_{1}-e_{0})\,e_{0}^{k_{d}-1}\,e_{1}\,e_{0}^{k_{d-1}-1}\,\cdots\, e_{1}\,e_{0}^{k_{i+1}-1};1)\\
	& =I(0;(-e_{1})\,e_{0}^{k_{d}-1}\,(e_{0}-e_{1})\,e_{0}^{k_{d-1}-1}\,\cdots\,(e_{0}-e_{1})\,e_{0}^{k_{i+1}-1};1)\\
	& =\zeta^{\star}(k_{d},k_{d-1},\ldots,k_{i+1}) \,,
\end{align*}
via the iterated integral representation from \eqref{eqn:zstar:int}.  It readily follows that
\[
	-\frac{1}{\ii \pi} I (0;W;-\infty)=-\frac{1}{\ii\pi } I_{\gamma}(0;W;+\infty) = \sum_{i=0}^{d} (-1)^{i} \ttw(k_{1},k_{2},\ldots,k_{i})\zeta^{\star}(k_{d},k_{d-1},\ldots,k_{i+1}) \,.
\]
This finishes the proof of the proposition.
\end{proof}
  
Finally, we can complete the proof of \autoref{thm:main:conv}.  From  \eqref{eq:zeta_half_int} and Proposition \ref{prop:Iterated_integral_relation_half}, we have
that
\[
-\ztw^\half(\bl(k_{d},\ldots,k_{1})) = -\frac{1}{\ii \pi} I(0;\omega\, e_{0}^{k_{1}-1}\,e_{1}\,e_{0}^{k_{2}-1}\,\cdots\, e_{1}\,e_{0}^{k_{d}-1}\,(e_{1}-e_{0});-\infty)
\]
when $k_{1},\ldots,k_{d}\geq2$. By \autoref{prop:iterated integral expression for convoluted sum}, this is
\[
=
\sum_{i=0}^{d}(-1)^{i} \ttw(k_{1},\ldots,k_{i})\zeta^{\star}(k_{d},\ldots,k_{i+1}) \,.
\]
So the claimed convolution identity holds, and \autoref{thm:main:conv} is proven. \hfill \qedsymbol

\bibliographystyle{habbrv2}
\bibliography{bibliography}

\end{document}